\numberwithin{equation}{section}
\newtheorem{main}{Theorem}
\newtheorem{lemma}{Lemma}[section]
\newtheorem{theorem}[lemma]{Theorem}
\newtheorem{prop}[lemma]{Proposition}
\newtheorem{cor}[lemma]{Corollary}
\theoremstyle{definition}
\newtheorem{example}[lemma]{Example}
\theoremstyle{remark}
\newtheorem{remark}[lemma]{Remark}
\newtheorem{question}[lemma]{Question}
\newcommand{\R}{\mathbb R}
\newcommand{\Z}{\mathbb Z}
\newcommand{\co}{\colon}
\newcommand{\pd}{\partial}
\newcommand{\ep}{\varepsilon}
\def\g{\gamma}
\newcommand\diam{\operatorname{diam}}
\newcommand\Aut{\operatorname{Aut}}
\newcommand\Out{\operatorname{Out}}
\newcommand\Inn{\operatorname{Inn}}
\newcommand\Diff{\operatorname{Diff}}
\newcommand\Mod{\operatorname{Mod}}
\def\be{\begin{equation}}
\def\ee{\end{equation}}
\title{Manifolds without conjugate points and their fundamental groups}
\begin{document}

\subjclass[2000]{53C20}

\keywords{conjugate points, focal points, fundamental group}

\thanks{The first author was supported in part by RFBR grant 11-01-00302-a.
The second author was supported in part by a Discovery grant from NSERC}

\author{Sergei Ivanov}
\address{Sergei Ivanov\\St.~Petersburg department of Steklov Math Institute RAS\\
191023, Fontanka 27, Saint Petersburg, Russia}
\email{svivanov@pdmi.ras.ru}

\author{Vitali Kapovitch}
\address{Vitali Kapovitch\\Department of Mathematics\\University of Toronto\\
Toronto, Ontario, M5S 2E4, Canada}\email{vtk@math.toronto.edu}

\begin{abstract}
We show that in the fundamental groups of  closed manifolds without conjugate points centralizers of all elements virtually split.
\end{abstract}

\maketitle \thispagestyle{empty}

\section{Introduction}
It is a classical consequence of Rauch comparison that manifolds of nonpositive curvature have no conjugate points.
While the converse need not hold even for closed manifolds~\cite{Gull75},  the following question
remains unanswered in dimensions above 2.

\begin{question}\label{q-nonpos}
Does every closed Riemannian manifold without conjugate points admit a metric of nonpositive curvature?
\end{question}

While the answer to this is likely negative, it has been shown that fundamental groups of
closed manifolds without conjugate points satisfy many properties
which are known to hold for nonpositively curved manifolds.
In particular, Croke and Schroeder \cite{CS} proved that 
if a closed manifold $\bar M$ admits
an \emph{analytic} metric without conjugate points then
every abelian subgroup of $\pi_1(\bar M)$ is straight
(i.e.\ quasi-isometrically embedded in $\pi_1(\bar M)$)
and every solvable subgroup of $\pi_1(\bar M)$  is virtually abelian.
Both of these properties are known to hold for groups which act isometrically,  properly  and co-compactly on CAT(0)-spaces
(we refer to such groups as CAT(0)-groups) or, more generally, for the  semi-hyperbolic groups
(which is a strictly  bigger class of groups,
see e.g. ~\cite{AB95} or ~\cite{HB99} for the definition).
Our main result is that the following known property of CAT(0)-groups
also holds for fundamental groups of closed manifolds without conjugate points. 

\begin{main}\label{t-main}
Let $\bar M$ be a closed manifold that admits
a $C^\infty$ Riemannian metric without conjugate points.
Then for every nontrivial element $\gamma\in\pi_1(\bar M)$,
its centralizer $Z(\gamma)<\pi_1(\bar M)$ virtually splits
over $\gamma$.

This means that there exists a finite index subgroup
$G<Z(\gamma)$ which is isomorphic to a direct product $\Z\times G'$
so that $\gamma$ corresponds to the generator of the $\Z$ factor.
\end{main}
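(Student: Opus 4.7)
The plan is to work in the Riemannian universal cover $M$ and construct, using the Busemann function of an axis of $\g$, a homomorphism $\phi\co Z(\g)\to\R$ whose kernel, on a finite-index subgroup, gives the $G'$ factor.

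First, I would pass to $M$ and let $\Gamma=\pi_1(\bar M)$ act by deck transformations. The absence of conjugate points forces the exponential map at each point of $M$ to be a local diffeomorphism and, via a Cartan--Hadamard-type argument on the simply connected $M$, a global diffeomorphism; hence $M\cong\R^n$, any two points are joined by a unique geodesic, and $\Gamma$ is torsion-free. A shortest representative in the free homotopy class of $\g$ is a closed geodesic of length $T>0$, and its lift $c\co\R\to M$ is an axis of $\g$, i.e.\ $\g\cdot c(t)=c(t+T)$. Any $\alpha\in Z(\g)$ commutes with $\g$, hence permutes axes of $\g$; in particular $\alpha c$ is an axis, and since $\g^n\alpha c(0)=\alpha c(nT)$ the function $t\mapsto d(c(t),\alpha c(t))$ is $T$-periodic, hence bounded by some $R=R(\alpha)$.

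Next, I would form the Busemann function
\be
b(x)=\lim_{t\to+\infty}\bigl(d(x,c(t))-t\bigr),
\ee
which satisfies $b\circ\g=b-T$. The crucial step is to show that for every $\alpha\in Z(\g)$ the difference $b\circ\alpha-b$ is a constant $-\phi(\alpha)\in\R$. Granted this, $\phi\co Z(\g)\to\R$ is a homomorphism with $\phi(\g)=T$. Standard arguments, using properness of the $\Gamma$-action together with the way $\phi(\alpha)$ controls the position of $\alpha c$ relative to $c$, then show that $\phi(Z(\g))$ is a discrete subgroup $(T/k)\Z$ of $\R$ for some integer $k\geq 1$. The preimage $G=\phi^{-1}(T\Z)$ is a finite-index subgroup of $Z(\g)$ containing $\g$, and the short exact sequence
\be
1\to\ker(\phi|_G)\to G\to\langle\g\rangle\to 1
\ee
splits via $1\mapsto\g$. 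Because $\g$ is central in $G\subset Z(\g)$, this split extension is in fact a direct product $G=\langle\g\rangle\times G'$ with $G'=\ker(\phi|_G)$, as required.

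The main obstacle, and the only step that genuinely uses the absence of conjugate points, is the claim that $b\circ\alpha-b$ is constant for every $\alpha\in Z(\g)$. In CAT(0) geometry this is immediate: the two axes $c$ and $\alpha c$, being at bounded distance, bound a flat strip and are asymptotic at infinity, so their Busemann functions differ by a constant. Without convexity of $d$ this CAT(0) argument collapses and must be replaced by a genuinely no-conjugate-points argument. My plan is to combine the $T$-periodicity of $d(c(t),\alpha c(t))$ with the uniqueness of geodesics in $M$ and some rigidity of forward asymptotic behavior of geodesics in no-conjugate-point manifolds to show that, for each $x\in M$, the quantity $d(x,c(t))-d(x,\alpha c(t))$ converges as $t\to\infty$ to a limit independent of $x$. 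The $C^\infty$ hypothesis presumably enters essentially here, and I expect this to be the hardest part of the argument; the remainder is essentially formal.
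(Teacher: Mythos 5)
Your overall architecture matches the paper's: Busemann function of an axis, a homomorphism $Z(\gamma)\to\R$ not vanishing on $\gamma$, then a formal splitting argument using centrality. But there are two genuine gaps, one of which is the entire content of the paper.

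The first and decisive gap is that the claim you correctly identify as crucial --- that $b\circ\alpha-b$ is constant for every $\alpha\in Z(\gamma)$ --- is left as a hope (``some rigidity of forward asymptotic behavior''), and no such soft argument is known. Equivalently, one must show that the Busemann functions of any two axes of $\gamma$ differ by a constant. Croke and Schroeder proved this only for analytic metrics, using local rectifiable path-connectedness of the set $A_\gamma$ of axes, and the whole point of the present paper is to remove analyticity. The actual proof requires: (i) a second-order estimate $|b_c^0(x)-b_c^0(y)|\le C\bigl(d(x,y)^2+f(x)+f(y)\bigr)$ for $b_c^0=b_c+b_c^-$ and $f=d_\gamma-\min d_\gamma$, obtained from the first variation formula and $\min d_{\gamma^2}=2\min d_\gamma$; (ii) a Sard-type measure-theoretic lemma (this is exactly where $C^\infty$ enters, via Taylor expansion to order $2n$ and an induction on dimension) showing $b_c^0(A_\gamma)$ has measure zero; and (iii) connectedness of $A_\gamma$, proved by Morse theory on $d_\gamma$, to conclude $b_c^0\equiv 0$ on $A_\gamma$, from which asymptoticity of the two axes and equality of their Busemann functions follow. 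None of this is ``essentially formal,'' and your proposed route via convergence of $d(x,c(t))-d(x,\alpha c(t))$ uniformly in $x$ has no visible mechanism behind it.

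The second gap is your assertion that $\phi(Z(\gamma))$ is a discrete subgroup $(T/k)\Z$ of $\R$. This is false in general: already for a generic flat torus with $\gamma$ a primitive lattice element, $Z(\gamma)=\Z^2$ and $\phi(\alpha)$ is the component of the translation vector of $\alpha$ along the axis direction, so the image is a dense subgroup $\Z T+\theta\Z$ with $\theta/T$ irrational; your $\phi^{-1}(T\Z)$ then has infinite index. The correct finishing move, as in the paper, does not use discreteness: one first shows $Z(\gamma)$ is finitely generated (a byproduct of the same Morse-theoretic argument, since $M/Z(\gamma)$ deformation retracts onto a compact sublevel set of the displacement function), deduces that the image of $\gamma$ in the finitely generated abelian group $H_1(Z(\gamma))$ is non-torsion because $\phi$ factors through it, and then uses the classification of finitely generated abelian groups to produce the finite-index subgroup on which $\gamma$ maps to a generator of a $\Z$ factor. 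Your final step (centrality of $\gamma$ turning a split extension into a direct product) is fine once a homomorphism $G\to\Z$ sending $\gamma$ to $1$ is in hand.
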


Actually, we prove an equivalent but more convenient property
of the centralizer (Corollary~\ref{c-infinite-order}):
the image of $\gamma$ in $H_1(Z(\gamma))=Z(\gamma)/[Z(\gamma),Z(\gamma)]$
is a non-torsion element.

The proof builds upon the already mentioned work of Croke and Schroeder \cite{CS}.
The main result of \cite{CS} (i.e.\ the straightness of abelian subgroups)
by itself does not imply the assertion of Theorem~\ref{t-main}
but some intermediate results in \cite{CS} do.
We remove the analyticity assumption from one of those results
(see Proposition~\ref{p-const-difference} below)
and deduce Theorem~\ref{t-main} from it.

\begin{remark}
Kleiner (unpublished) and Lebedeva \cite{Lebedeva}
found simpler proofs of the main result of \cite{CS}
that work without analyticity.
These proofs go via a different route that do not
yield Theorem~\ref{t-main}.
\end{remark}

\begin{remark}
Our proof requires $C^\infty$ regularity of the metric or,
more precisely, $C^r$ regularity for some $r$
depending on $n=\dim M$.
This is needed in Lemma~\ref{l-analytic}.
We do not know whether Theorem~\ref{t-main} is true for
$C^r$ metrics for any fixed $r$.

Examples constructed in \cite{BIK} show that
Lemma~\ref{l-analytic} and an analogue of
Proposition~\ref{p-const-difference}
(in a similar but not identical context)
fail if $n\gg r$.
However it might be possible that some
features of metrics without conjugate points
(e.g.\ the fact that displacement functions
do not have critical points other than minima)
can be used to squeeze more regularity out
of the problem.
\end{remark}

\begin{remark}
A straightforward modification of the proof
shows that Theorem~\ref{t-main} holds for
Finsler metrics as well,
but we do not bother the reader with this generalization.
\end{remark}

\subsection*{Organization of the paper}
In Section~\ref{s:prelim} we provide necessary background material on the properties of
universal covers of closed manifolds without conjugate points.
In Section~\ref{s:busemann} we prove the key technical result Proposition~\ref{p-const-difference}
which allows us to remove the analyticity assumption from the arguments in~\cite{CS}.
In Section~\ref{sec-proof-main} we prove Theorem~\ref{t-main}.
In Section~\ref{s:examples} we construct new examples of manifolds that can be shown
not to admit metrics without conjugate points by Theorem~\ref{t-main} but not by previously known results.
In Section~\ref{s:solv} we prove that fundamental groups of closed manifolds without conjugate points
have solvable word and conjugacy problems (Theorem~\ref{t-solv-word}).
In Section~\ref{section-no-focal} we consider an analogue of Question~\ref{q-nonpos}
for metrics without \emph{focal} points and prove that the answer is affirmative
in dimension~3 (Theorem~\ref{3-no-focal}).

Lastly, in Section~\ref{s:open} we discuss a number of open problems concerning manifolds without conjugate points.

\smallskip

{\it Acknowledgements.}
The first author is grateful to the organizers of 
``Geometry in Inverse Problems'' program held in March--April 2012
at Fields Institute, Toronto, where this work started.
The second author is very grateful to Misha Kapovich and Ilya Kapovich
for many helpful conversations and suggestions during the preparation of this paper.

\section{Preliminaries}\label{s:prelim}
In this section we collect the necessary preliminaries,
borrowed mainly from \cite{CS}.

Let $\bar M$ be a compact Riemannian manifold without conjugate points
and $M$ its universal cover.
We represent the fundamental group $\pi_1(\bar M)$ as the group $\Gamma$
of deck transformations of $M$. This group
acts on $M$ by isometries and $\bar M=M/\Gamma$.
We fix the notation $\bar M$, $M$ and $\Gamma$ for the rest of the paper.

Since the metric has no conjugate points,
every geodesic in $M$ is minimizing,
$\exp_x\co T_xM\to M$ is a diffeomorphism
for every $x\in M$, and the Riemannian distance
function $d\co M\times M\to\R_+$ is smooth
outside the diagonal.
All geodesics throughout the paper are
parametrized by arc length.

Fix a nontrivial element $\gamma\in\Gamma$.
The \textit{displacement function} $d_\gamma\co M\to\R_+$
is defined by
$$
 d_\gamma(x) = d(x,\gamma x), \qquad x\in M.
$$
A complete geodesic $c\co\R\to M$ is called  an \textit{axis} of $\gamma$
if $\gamma$ translates $c$ forward along itself, i.e., there is a constant
$L>0$ such that $\gamma c(t)=c(t+L)$ for all $t\in\R$.
Note that if $c$ is an axis of $\gamma$, then the reverse geodesic
$t\mapsto c(-t)$ is an axis of $\gamma^{-1}$.
Let $A_\gamma\subset M$ denote the union of all axes of $\gamma$.

The following lemma summarizes several results from~\cite{CS} that we will need in what follows.

\begin{lemma}\label{l-axes-trivia}
In the above notation, the following holds.
\begin{enumerate}
\item\label{i-mind}
The function $d_\gamma$ assumes a positive minimum, $\min d_\gamma$.
The set of points $x\in M$ where $d_\gamma(x)=\min d_\gamma$,
is equal to $A_\gamma$.
\item\label{i-shift}
The isometry $\gamma$ translates all its axes by the same amount,
namely $\min d_\gamma$. That is, if $c$ is an axis of $\gamma$ then
$\gamma(c(t))=c(t+\min d_\gamma)$ for all $t\in\R$.
\item\label{i-mindm}
$\min d_{\gamma^m}=m\cdot\min d_\gamma$ for every integer $m\ge 1$.

\item\label{l-axes-crit}
 $A_\gamma$ is equal to the set of critical points of $d_\gamma$.
 In particular $d_\gamma$ has no critical points outside its minimum set.
\end{enumerate}
\end{lemma}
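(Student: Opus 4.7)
The plan is to prove all four parts in one sweep, using (i) a cocompactness argument for existence of a minimum, (ii) a chain-rule computation identifying critical points of $d_\gamma$ with basepoints of axes, and (iii) the stable translation length of $\gamma$ to align $\min d_\gamma$ with the axis translation length.

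First I would show $d_\gamma$ attains a minimum. Given a minimizing sequence $x_n \in M$, pick $\sigma_n \in \Gamma$ so that $y_n = \sigma_n x_n$ lies in a fixed compact fundamental domain $K$. The conjugates $\gamma_n = \sigma_n \gamma \sigma_n^{-1}$ satisfy $d(y_n, \gamma_n y_n) = d_\gamma(x_n)$, which is bounded, so $\gamma_n y_n$ stays in a compact set; discreteness of $\Gamma$ forces $\gamma_n$ to take only finitely many values on a subsequence. Along a further subsequence, $\gamma_n$ stabilizes at some $\gamma'$ conjugate to $\gamma$ and $y_n \to y$, giving $d_{\gamma'}(y) = \inf d_\gamma$; conjugating back produces a minimum of $d_\gamma$, which is positive because $\Gamma$ acts freely.

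Next I would compute $\nabla d_\gamma$. Since the metric has no conjugate points, $d$ is smooth off the diagonal, and using that $\gamma$ is an isometry (so $(d\gamma)^* = (d\gamma)^{-1}$), the chain rule gives \[ \nabla d_\gamma(x) = -v - d(\gamma^{-1})(w), \] where $v \in T_x M$ and $w \in T_{\gamma x} M$ are the unit initial tangents of the unique geodesic from $x$ to $\gamma x$ and of its reverse. Vanishing of the gradient is equivalent to $d\gamma(v) = -w$, meaning the geodesic from $x$ to $\gamma x$ extended past $\gamma x$ has initial tangent matching the geodesic from $\gamma x$ to $\gamma^2 x$. By uniqueness of geodesic extensions, iterating shows the full geodesic through $x$ with initial tangent $v$ is an axis of $\gamma$ with translation length $L = d(x,\gamma x)$; conversely, every axis basepoint is automatically a critical point, and $d_\gamma \equiv L$ along the axis. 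This establishes (4).

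Finally I would introduce the stable translation length $\ell(\gamma) = \lim_n d(y,\gamma^n y)/n = \inf_n d(y,\gamma^n y)/n$ (Fekete subadditivity, independent of $y$). Trivially $\ell(\gamma) \le \min d_\gamma$; on any axis $d(c(0),\gamma^n c(0)) = nL$ gives $\ell(\gamma) = L$, and $\min d_\gamma \le d_\gamma(c(0)) = L$, so $L = \min d_\gamma$ on every axis. This proves (2) and the inclusion $A_\gamma \subseteq \{d_\gamma = \min d_\gamma\}$; the opposite inclusion in (1) follows from (4) since minima are critical points. For (3), evaluating $d_{\gamma^m}$ on an axis yields $\min d_{\gamma^m} \le m \min d_\gamma$, while $\min d_{\gamma^m} \ge \ell(\gamma^m) = m \ell(\gamma) = m \min d_\gamma$ closes the inequality. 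The main obstacle is the gradient identification in (4): it is the one place that genuinely uses no conjugate points, through both smoothness of $d$ off the diagonal and uniqueness of geodesic extensions, and it is what ultimately couples critical points of $d_\gamma$ to axes.
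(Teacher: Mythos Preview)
Your argument is correct. The paper itself does not give a proof of this lemma; it simply cites \cite[Lemma~2.1]{CS}, so there is no in-paper argument to compare against beyond the original Croke--Schroeder one. Your treatment of part~(4) via the gradient identity $\nabla d_\gamma(x)=-v-d(\gamma^{-1})(w)$ and the observation that its vanishing forces the minimizing segment $[x,\gamma x]$ to extend to an axis is exactly the standard mechanism, and it is the step that genuinely uses the no-conjugate-points hypothesis (smoothness of $d$ off the diagonal and uniqueness of geodesics). The use of the stable translation length $\ell(\gamma)=\lim_n d(y,\gamma^n y)/n$ to show that every axis has translation length $\min d_\gamma$ and to close the inequality in~(3) is a clean device; one can alternatively get~(2) directly from~(4) by noting that the gradient computation already shows every critical value equals the translation length along the axis through that point, and then that any two axes share the same $L$ because each is an axis of $\gamma^m$ with $d_{\gamma^m}=mL$ attaining its minimum there, but your route via $\ell(\gamma)$ is equally valid and arguably tidier.
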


\begin{proof}
See \cite[Lemma 2.1]{CS} and remarks there.
\end{proof}

The following lemma is proved in \cite[Lemma 2.4]{CS},
however it is not made very clear there that the proof
does not depend on analyticity of the metric.
Therefore we include a proof (essentially the same one) here.

\begin{lemma}\label{l-connected}
The set $A_\gamma$ is connected. 
\end{lemma}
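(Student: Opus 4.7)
The plan is to argue by contradiction using a standard mountain-pass (minimax) argument, leveraging Lemma~\ref{l-axes-trivia}\eqref{l-axes-crit}: $d_\gamma$ has no critical points outside $A_\gamma$.

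Suppose $A_\gamma$ is disconnected. Since it is closed in $M$ (as the zero set of the continuous function $d_\gamma - \min d_\gamma$) and $M$ is a normal metric space, I can write $A_\gamma = A_1 \sqcup A_2$ with $A_1, A_2$ nonempty, disjoint, closed, and separated by disjoint open neighborhoods in $M$. Choose $x_i \in A_i$ and put
$$
\mu \;:=\; \inf_{\alpha}\, \max_{t \in [0,1]} d_\gamma(\alpha(t)),
$$
the infimum ranging over continuous paths $\alpha \co [0,1] \to M$ with $\alpha(0) = x_1$ and $\alpha(1) = x_2$. Provided $\mu > \min d_\gamma$, Lemma~\ref{l-axes-trivia}\eqref{l-axes-crit} guarantees $|\nabla d_\gamma| > 0$ on the annulus $\{\min d_\gamma < d_\gamma \le \mu + \ep\}$; integrating the normalized negative gradient $-\nabla d_\gamma / |\nabla d_\gamma|^2$ then deforms any path of near-optimal max-value to one with strictly smaller max-value, contradicting the definition of $\mu$.

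To establish $\mu > \min d_\gamma$ I would argue as follows: a sequence of paths with max-value tending to $\min d_\gamma$ lies eventually in arbitrarily thin $d_\gamma$-sublevel neighborhoods of $A_\gamma$. Using the disjoint open separation of $A_1, A_2$ together with the $\langle\gamma\rangle$-invariance of $d_\gamma$ and compactness of the projection of $A_\gamma$ in $\bar M$, one shows that such thin sublevel neighborhoods decompose into two open ``tubes'' about $A_1$ and $A_2$ respectively; a connected path therefore cannot go from $x_1$ to $x_2$ inside such a neighborhood.

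The main obstacle is the deformation step. Since $M$ is non-compact, $|\nabla d_\gamma|$ may a priori fail to be bounded below on the annular region, and so the flow of $-\nabla d_\gamma/|\nabla d_\gamma|^2$ need not push near-optimal paths by a definite amount. I would circumvent this by exploiting the $\langle\gamma\rangle$-invariance of $d_\gamma$ together with compactness of $\bar M$: the region on which the deformation must be performed has compact image in $\bar M$, which by continuity and Lemma~\ref{l-axes-trivia}\eqref{l-axes-crit} yields the required uniform positive lower bound on $|\nabla d_\gamma|$.
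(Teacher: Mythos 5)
Your minimax strategy is reasonable in outline and is close in spirit to what the paper does (the paper also exploits that $d_\gamma$ has no critical points off $A_\gamma$, via a Morse-theoretic deformation retraction onto sublevel sets). But the way you resolve your own ``main obstacle'' contains a genuine gap. The function $d_\gamma$ is \emph{not} $\Gamma$-invariant --- for a deck transformation $\alpha$ one has $d_\gamma(\alpha x)=d_{\alpha^{-1}\gamma\alpha}(x)$ --- so neither $d_\gamma$ nor $|\nabla d_\gamma|$ descends to $\bar M$, and the observation that your annular region has compact image in $\bar M$ is vacuous (every subset of $M$ does) and yields no lower bound on $|\nabla d_\gamma|$. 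What $d_\gamma$ is invariant under is $Z(\gamma)$ (and in particular $\langle\gamma\rangle$), but the quotients $M/\langle\gamma\rangle$ and $M/Z(\gamma)$ are non-compact, so the compactness of the band $\{c_1\le \bar d_\gamma\le c_2\}$ downstairs is exactly the statement that the induced function $\bar d_\gamma$ on $M/Z(\gamma)$ is \emph{proper}. That properness is a nontrivial consequence of the no-conjugate-points hypothesis, proved in \cite[Lemma 2.2]{CS}, and it is precisely the ingredient the paper's proof invokes; it cannot be replaced by ``$\bar M$ is compact plus $\langle\gamma\rangle$-invariance.'' Without it, $|\nabla d_\gamma|$ may tend to $0$ along a sequence escaping to infinity in $M/Z(\gamma)$ while $d_\gamma$ stays in the band, and the deformation lemma fails (this is the usual Palais--Smale failure mode for mountain-pass arguments on non-compact spaces).

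The same non-compactness infects your preliminary step that $\mu>\min d_\gamma$: you need $\{d_\gamma\le\min d_\gamma+\ep\}\subset V_1\cup V_2$ for small $\ep$, i.e.\ $\inf_{M\setminus(V_1\cup V_2)}d_\gamma>\min d_\gamma$, and this infimum is over a non-compact set; moreover your chosen separation $A_\gamma=A_1\sqcup A_2$ need not be $Z(\gamma)$-invariant, so you cannot simply push this down to $M/Z(\gamma)$ either. For comparison, the paper's proof passes to $M/Z(\gamma)$, uses the properness of $\bar d_\gamma$ from \cite[Lemma 2.2]{CS} together with the absence of critical values above the minimum to deformation-retract $M/Z(\gamma)$ onto each sublevel set $\bar U_\ep$, lifts this retraction through the covering $M\to M/Z(\gamma)$ to conclude that $U_\ep\subset M$ is a deformation retract of the connected space $M$, and then takes the nested intersection $A_\gamma=\bigcap_\ep U_\ep$. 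If you add the properness input (and run your argument in $M$, where the gradient bound then follows by $Z(\gamma)$-equivariance), your mountain-pass version can be made to work, but as written the crucial compactness is asserted for the wrong quotient.
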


\begin{proof}
Consider the centralizer $Z(\gamma)\subset\Gamma$ of $\gamma$.
It is easy to see that the action of $Z(\gamma)$ on $M$
preserves $d_\gamma$, hence  $d_\gamma$ induces a well defined
function $\bar d_\gamma$ on $M/Z(\gamma)$.
Furthermore $\bar d_\gamma$ is proper
(see \cite[Lemma 2.2]{CS})
and by Lemma~\ref{l-axes-trivia}(\ref{l-axes-crit}) it has no critical point
outsize its minimum set $\bar A_\gamma$.
Therefore by Morse theory every sublevel set
$$
\bar U_\ep=\{x\in M/Z(\gamma):\bar d_\gamma(x)\le \min\bar d_\gamma+\ep\},
\qquad\ep>0,
$$
is a strong deformation retract of $M/Z(\gamma)$.
Since the projection $\pi\co M\to M/Z(\gamma)$ is a covering map,
it follows that the set
$$
 U_\ep:=\pi^{-1}(\bar U_\ep) = \{x\in M:d_\gamma(x)\le \min d_\gamma+\ep\}
$$
is a strong deformation retract of $M$.
Hence $U_\ep$ is connected. Since $A_\gamma = \bigcap_{\ep>0} U_\ep$,
it follows that $A_\gamma$ is connected as well.
\end{proof}

\begin{remark}\label{centralizer-fg}
The proof of Lemma~\ref{l-connected}  implies that $Z(\gamma)$ is finitely generated and finitely presented
since it shows that $M/Z(\gamma)$ is homotopy equivalent to $\bar U_\ep$ which is
compact manifold with boundary.
\end{remark}

The \textit{Busemann function} of a (minimizing)
geodesic $c\co\R\to M$ is a function $b_c\co M\to\R$
defined by
$$
 b_c(x) = \lim_{t\to+\infty} d(x,c(t))-t .
$$
The triangle inequality implies that the function
$t\mapsto d(x,c(t))-t$ is non-increasing,
hence $b_c(x)$ is well defined and $b_c(x)\le d(x,c(t))-t$
for all $t\in\R$. Note that $b_c(c(t))=-t$ for all $t\in\R$.

Clearly Busemann functions are 1-Lipschitz.
Busemann functions are naturally translated by isometries,
namely if $\alpha\co M\to M$ is an isometry,
then $b_c(x)=b_{\alpha c}(\alpha x)$ for all $x\in M$.
Changing the origin of a geodesic adds a constant to its
Busemann function, namely
if $c_1(t)=c(t+L)$ where $L$ is a constant,
then $b_{c_1}(x)=b_c(x)+L$.

\begin{lemma}\label{l-busemann-trivia}
Let $c$ and $c_1$ be axes of $\gamma$. Then
\begin{enumerate}
 \item\label{i-busemann-shift}
$b_c(\gamma x)=b_c(x)-\min d_\gamma$ for all $x\in M$.
 \item\label{i-busemann-decay}
$b_c$ decays at unit rate along $c_1$,
that is $b_c(c_1(t+t_1)) = b_c(c_1(t))-t_1$ for all $t,t_1\in\R$.
\end{enumerate}
\end{lemma}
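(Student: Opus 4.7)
The plan is to derive part (\ref{i-busemann-shift}) directly from the axis condition $\gamma c(t)=c(t+\min d_\gamma)$ and the very definition of the Busemann function, and then bootstrap part (\ref{i-busemann-decay}) out of part (\ref{i-busemann-shift}) together with the $1$-Lipschitz property of $b_c$.

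For (\ref{i-busemann-shift}), set $L=\min d_\gamma$. Unrolling the limit and using that $\gamma$ is an isometry gives
\[
b_c(\gamma x)=\lim_{t\to+\infty}\bigl(d(\gamma x,c(t))-t\bigr)=\lim_{t\to+\infty}\bigl(d(x,\gamma^{-1}c(t))-t\bigr).
\]
Since $\gamma c(s)=c(s+L)$ for every $s$, one has $\gamma^{-1}c(t)=c(t-L)$, and after the substitution $s=t-L$ the right-hand side becomes $\lim_{s\to+\infty}\bigl(d(x,c(s))-(s+L)\bigr)=b_c(x)-L$, which is the claim.

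For (\ref{i-busemann-decay}), apply (\ref{i-busemann-shift}) with $x=c_1(t)$; since $c_1$ is also an axis of $\gamma$, $\gamma c_1(t)=c_1(t+L)$, so
\[
b_c(c_1(t+L))=b_c(\gamma c_1(t))=b_c(c_1(t))-L,
\]
and iteration gives $b_c(c_1(t+nL))=b_c(c_1(t))-nL$ for every nonnegative integer $n$. For an arbitrary $t_1>0$, choose $n$ with $nL\ge t_1$; the $1$-Lipschitz estimate
\[
b_c(c_1(t+t_1))-b_c(c_1(t+nL))\le d(c_1(t+t_1),c_1(t+nL))=nL-t_1,
\]
combined with the identity for $nL$, gives $b_c(c_1(t+t_1))\le b_c(c_1(t))-t_1$, while the reverse inequality is immediate from $1$-Lipschitzness. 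Negative $t_1$ reduces to the positive case by relabelling the basepoint as $t+t_1$.

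I do not expect a serious obstacle: the content is essentially the axis translation relation read through the Busemann limit. The one mildly subtle step is the sandwich argument for (\ref{i-busemann-decay}); $1$-Lipschitzness alone yields only a one-sided estimate, and it is the exact decrease by $L$ on the lattice $\{t+nL\}$, supplied by (\ref{i-busemann-shift}), that upgrades this estimate to equality for every $t_1\in\R$.
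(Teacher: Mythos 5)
Your proposal is correct and follows essentially the same route as the paper: part (1) via the translation behaviour of Busemann functions under the axis relation $\gamma^{-1}c(t)=c(t-L)$, and part (2) by combining the resulting exact decrease on the lattice $\{t+mL\}$ with the $1$-Lipschitz property. You merely spell out the final sandwich step, which the paper leaves implicit.
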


\begin{proof}
Let $L=\min d_\gamma$. Then $b_c(\gamma x)=b_{\gamma^{-1}c}(x)=b_c(x)-L$
since $\gamma^{-1}c(t)=c(t-L)$ by Lemma~\ref{l-axes-trivia}(\ref{i-shift}).
This proves the first assertion. It implies that
$$
 b_c(c_1(t+mL)) = b_c(\gamma^m c_1(t)) = b_c(c_1(t)) - mL
$$
for all $m\in\Z$. Since $b_c$ is 1-Lipschitz, the second assertion follows.
\end{proof}

\section{Busemann functions of axes of isometries}\label{s:busemann}

The goal of this section in to prove the following.

\begin{prop}
\label{p-const-difference}
Let $\g\in\Gamma\setminus\{e\}$, and let $c$ and $c_1$ be axes of $\gamma$.
Then $b_c-b_{c_1}$ is constant on~$M$.
\end{prop}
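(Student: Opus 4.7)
Set $f := b_c - b_{c_1}$ and $L := \min d_\gamma$. The plan is to first show that $f$ is constant on the axial set $A_\gamma$, and then to extend the constancy to all of $M$.

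Two observations set up the first stage. Applying Lemma~\ref{l-busemann-trivia}(\ref{i-busemann-shift}) to both $c$ and $c_1$ yields $f(\gamma x) = f(x)$, so $f$ is $\gamma$-invariant. Applying Lemma~\ref{l-busemann-trivia}(\ref{i-busemann-decay}) to both $b_c$ and $b_{c_1}$ along an arbitrary axis $c_2$ of $\gamma$ gives
$$
b_c(c_2(t)) - b_{c_1}(c_2(t)) = b_c(c_2(0)) - b_{c_1}(c_2(0)),
$$
so $f$ is constant along each axis; in particular, any two axes through a common point take a common $f$-value.

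To upgrade this to $f \equiv C$ on all of $A_\gamma$, I would use the connectedness of $A_\gamma$ from Lemma~\ref{l-connected}. Every point of $A_\gamma$ lies on some axis, so each level set of $f|_{A_\gamma}$ is a union of axes; combining this with the local coherence just noted (axes sharing a point share a value) and the continuity of $f$, a connectedness argument on $A_\gamma$ pins $f$ to a single value $C$.

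The remaining step — extending $f \equiv C$ from $A_\gamma$ to $M$ — is the main obstacle and the reason the analyticity assumption was needed in~\cite{CS}. The approach I would take relies on the $C^\infty$ regularity hypothesis to ensure that each Busemann function is $C^1$ with unit gradient, so that the integral curves of $-\nabla b_c$ are the unique asymptotic geodesics to $c$ and the corresponding flow is $\gamma$-equivariant. A direct computation from Lemma~\ref{l-busemann-trivia}(\ref{i-busemann-decay}) together with the Lipschitz estimate $|\nabla b_c|\le 1$ shows that at every point $x = c_2(t)$ of $A_\gamma$ one has $\nabla b_c(x) = -\dot c_2(t) = \nabla b_{c_1}(x)$, so the two Busemann flows coincide on the axial set. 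Moreover, since $\gamma$ translates both $c$ and $c_1$ by the same amount $L$, the function $t\mapsto d(c(t), c_1(t))$ is $L$-periodic and hence uniformly bounded, so $c$ and $c_1$ are mutually asymptotic in both directions. The strategy is then to combine $\gamma$-invariance of $f$ with the monotonicity of $f$ along the gradient flow of $-b_c$ (which comes from the 1-Lipschitz bound on $b_{c_1}$) and the symmetric monotonicity along the flow of $-b_{c_1}$ to squeeze $f$ between $C$ and $C$. The delicate point I expect to be hardest is establishing the requisite $C^1$ regularity and uniqueness of asymptotic directions for Busemann functions without conjugate points, and controlling how the two flows interact globally — precisely the ingredient whose analytic shortcut in~\cite{CS} this section of the paper is working to replace.
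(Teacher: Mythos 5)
Your first stage contains the real gap, and it is exactly the point this section of the paper is built to overcome. Knowing that $f=b_c-b_{c_1}$ is constant along every individual axis and that $A_\gamma$ is connected does \emph{not} force $f$ to be constant on $A_\gamma$: distinct axes of $\gamma$ are pairwise disjoint (two axes through a common point would pass through that point and its $\gamma$-image, hence coincide by uniqueness of geodesics), so $A_\gamma$ is a disjoint union of axes and your ``local coherence'' observation is vacuous. A continuous function on a connected union of disjoint leaves that is constant on each leaf can still be nonconstant (think of $(x,y)\mapsto x$ on $\R^2$ foliated by vertical lines). This is precisely where \cite{CS} invoked analyticity --- to get local rectifiable path connectedness of $A_\gamma$ --- and where the paper instead substitutes Lemmas~\ref{l-busemann-estimate} and~\ref{l-analytic}: one proves the quadratic estimate $|b_c^0(x)-b_c^0(y)|\le C\big(d(x,y)^2+f(x)+f(y)\big)$ with $f=d_\gamma-\min d_\gamma$, and then a Sard-type induction using $C^\infty$ smoothness of $d_\gamma$ shows that $b_c^0(A_\gamma)$ has Lebesgue measure zero; being also connected, it is a single point. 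Your proposal offers no replacement for this step.

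Your second stage also misidentifies where the difficulty lies. The extension from $A_\gamma$ to all of $M$ is the \emph{easy} part in the paper and uses no regularity of Busemann functions: once one knows $b_c^0=0$ at the point $c_1(0)\in A_\gamma$ (Corollary~\ref{c-b0}), i.e.\ that $c_1$ is asymptotic to $c$ in both directions in the Busemann sense, a direct triangle-inequality chase through far-away points on $c_1$ and then on $c$ gives $b_c\le b_{c_1}$, and equality follows by symmetry. By contrast, your route through gradient flows presupposes that Busemann functions on manifolds without conjugate points are $C^1$ with unit gradient and that asymptotic geodesics are unique; neither is known in this generality nor established anywhere in the paper, so as written this is a second unfilled gap. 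Note also that boundedness of $t\mapsto d(c(t),c_1(t))$ only yields that $b_c-b_{c_1}$ is bounded, not that it is constant.
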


This proposition was proved in the analytic case
in \cite[Proposition 2.5]{CS}.
The analyticity assumption
is used  in \cite{CS} to ensure some regularity
of the set $A_\gamma$, namely it implies
that this set is {\it locally rectifiably path connected} (see~\cite{CS} for the definition).
This property and local analysis of Busemann
functions on $A_\gamma$ yields the result.
We do not know if $A_\gamma$ is locally
rectifiably path connected if the metric is only $C^\infty$.
We work around this issue by more delicate analysis
of the behavior of Busemann functions in a neighborhood
of $A_\gamma$ (Lemma~\ref{l-busemann-estimate}).

For a geodesic $c$ in $M$, denote by $b_c^-$
the Busemann function of the reverse geodesic
$t\mapsto c(-t)$
and let $b_c^0=b_c+b_c^-$.
The definition of a Busemann function
and the triangle inequality imply that
$b_c^0(x)\ge 0$ for all $x\in M$.

\begin{lemma}\label{l-busemann-estimate}
Let $c$ be an axis of $\gamma$ and $L=\min d_\gamma$.
Define $f(x)=d_\gamma(x)-L$ for $x\in M$.
Then there exists $C>0$ such that
\begin{equation}
\label{e-busemann-estimate}
 |b_c^0(x)-b_c^0(y)| \le C\big( d(x,y)^2 + f(x) +f(y) \big)
\end{equation}
for all $x,y\in M$.
\end{lemma}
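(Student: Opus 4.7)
The plan is to combine the $\gamma$-equivariance of $b_c$ from Lemma~\ref{l-busemann-trivia}(\ref{i-busemann-shift}) with the 1-Lipschitz property to obtain a \emph{master inequality}, and then carry out a first-variation analysis along the minimizing geodesic from $y$ to $x$. The absence of conjugate points makes distance functions smooth and makes first variation globally valid, and cocompactness of the $\Gamma$-action provides uniform estimates.

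\smallskip
\emph{Step 1 (Master inequality).} From $b_c(\gamma^m y)=b_c(y)-mL$ and $|b_c(x)-b_c(\gamma^m y)|\le d(x,\gamma^m y)$ one gets
\[
b_c(x)-b_c(y)=\bigl[b_c(x)-b_c(\gamma^m y)\bigr]-mL\le d(x,\gamma^m y)-mL,
\]
and the analogous inequality for $b_c^-$ (applied via Lemma~\ref{l-busemann-trivia} to the axis $\tilde c$ of $\gamma^{-1}$) gives $b_c^-(x)-b_c^-(y)\le d(x,\gamma^{-m}y)-mL$. Summing,
\[
b_c^0(x)-b_c^0(y)\le d(x,\gamma^m y)+d(x,\gamma^{-m}y)-2mL\qquad (m\ge 1).
\]

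\smallskip
\emph{Step 2 (First variation).} Let $\delta:=d(x,y)$ and $\sigma\colon[0,\delta]\to M$ the minimizing geodesic from $y$ to $x$. For any $p\ne \sigma(s)$,
\[
d(x,p)-d(y,p)=-\int_0^\delta\langle\dot\sigma(s),\eta_p(\sigma(s))\rangle\,ds,
\]
where $\eta_p(z)$ is the unit initial velocity of the geodesic from $z$ to $p$. Applied to $p=\gamma^{\pm m}y$ and combined with Step~1 and the identity $d(y,\gamma^m y)=d(y,\gamma^{-m}y)=d_{\gamma^m}(y)$, I obtain, with $f_{\gamma^m}(y):=d_{\gamma^m}(y)-mL\ge 0$,
\[
b_c^0(x)-b_c^0(y)\le 2f_{\gamma^m}(y)-\int_0^\delta\bigl\langle\dot\sigma(s),\eta^+_m(\sigma(s))+\eta^-_m(\sigma(s))\bigr\rangle\,ds.
\]

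\smallskip
\emph{Step 3 (Reduction to $m=1$).} I intend to specialize to $m=1$, so that the outer term is $2f(y)$, which already fits on the right-hand side of the target inequality. The burden then falls entirely on bounding the integrand. The point is that for $z\in A_\gamma$ the two unit vectors $\eta^+_1(z)$ and $\eta^-_1(z)$ (the initial directions from $z$ to $\gamma z$ and $\gamma^{-1} z$) are antipodal, because the geodesic from $\gamma^{-1} z$ through $z$ to $\gamma z$ extends to a full axis. Equivalently, by Lemma~\ref{l-axes-trivia}(\ref{l-axes-crit}), $\nabla d_\gamma$ and $\eta^+_1+\eta^-_1$ vanish simultaneously precisely on the minimum set $A_\gamma$ of $d_\gamma$.

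\smallskip
\emph{Step 4 (Quadratic estimate).} The key quantitative input I expect to need is a \emph{uniform Morse-type inequality}
\[
\bigl\|\eta^+_1(z)+\eta^-_1(z)\bigr\|^2\le C_0\,f(z)\qquad\text{for all }z\in M,
\]
which should follow by compactness from the fact that $d_\gamma$ is smooth, $\Gamma$-equivariant under the centralizer $Z(\gamma)$, and attains its minimum precisely on $A_\gamma$ with no other critical values; the absence of conjugate points prevents degeneracies at infinity in the transverse directions. This is the step I expect to be the main obstacle, since without analyticity one has no a priori regularity of $A_\gamma$ and must work around it using only that $A_\gamma$ is the critical set and that $d_\gamma\in C^\infty$.

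\smallskip
\emph{Step 5 (Assembling).} Using Steps 3 and 4, the Cauchy--Schwarz inequality, and the fact that $f$ is 2-Lipschitz (so $f(\sigma(s))\le f(y)+2s$ and $f(\sigma(s))\le f(x)+2(\delta-s)$), I estimate
\[
\int_0^\delta\|\eta^+_1+\eta^-_1\|(\sigma(s))\,ds\le \sqrt{C_0}\int_0^\delta\sqrt{f(\sigma(s))}\,ds,
\]
and then apply the elementary inequality $2\sqrt{ab}\le a+b$ repeatedly to convert products of the form $\delta\cdot\sqrt{f(x)+f(y)}$ into $\delta^2+f(x)+f(y)$. For $\delta$ large, the bound follows trivially from the $2$-Lipschitz continuity of $b_c^0$ (since $2\delta\le 2\delta^2$ once $\delta\ge 1$). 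Applying the symmetric argument (swapping the roles of $x$ and $y$) yields the two-sided bound with $f(x)+f(y)$ on the right-hand side, completing the proof.
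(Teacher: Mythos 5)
Your Steps 1--3 reproduce the correct skeleton, and Step 4 correctly isolates the crux: the pointwise bound $\bigl\|\eta^+_1(z)+\eta^-_1(z)\bigr\|^2\le C_0 f(z)$, which is exactly $|\nabla d_\gamma(z)|^2\le C_0\,(d_\gamma(z)-L)$. But you leave this unproved, and the justification you gesture at (``follows by compactness'', ``no degeneracies at infinity'', the fact that $A_\gamma$ is the whole critical set) is not the right mechanism: compactness of $\{d_\gamma\le L+1\}/Z(\gamma)$ tells you nothing near $A_\gamma$ itself, where both sides of the inequality vanish. The actual proof is elementary and quantitative. The function $z\mapsto d(z,\gamma x)+d(z,\gamma^{-1}x)-2L$ is nonnegative by the triangle inequality together with $\min d_{\gamma^2}=2L$ (Lemma~\ref{l-axes-trivia}(\ref{i-mindm}) --- the one ingredient your sketch never invokes); it is smooth with Hessian uniformly bounded by cocompactness on the region where it matters; it equals $2f(x)$ at $z=x$ and its gradient there is $-(v_++v_-)$, where $v_\pm$ are the unit vectors from $x$ toward $\gamma^{\pm1}x$. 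A nonnegative $C^2$ function with Hessian bounded by $2C_1$ satisfies $|\nabla g|^2\le 4C_1\,g$ (evaluate at $\exp_x(t_0v)$ with $v$ in the gradient direction and $t_0\sim\sqrt{f(x)/C_1}$, and use $g\ge0$ there). This yields $|v_++v_-|\le 4\sqrt{C_1 f(x)}$, which is your Step 4.

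Even granting Step 4, your Step 5 does not close. First, the vectors in your Step 2 integrand point from $\sigma(s)$ toward the \emph{fixed} points $\gamma^{\pm1}y$, whereas Step 4 concerns the directions from $z$ toward $\gamma^{\pm1}z$; the two sums coincide only at $z=y$. Second, and more seriously, integrating along the geodesic loses a power: the only available bound $f(\sigma(s))\le f(y)+2s$ gives $\int_0^\delta\sqrt{f(\sigma(s))}\,ds\le \delta\sqrt{f(y)}+C\delta^{3/2}$, and $\delta^{3/2}$ is \emph{not} $O\bigl(\delta^2+f(x)+f(y)\bigr)$ for small $\delta$ (take $x,y\in A_\gamma$ close together); the exponent $2$ is precisely what the downstream Lemma~\ref{l-analytic} requires. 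The cure is not to integrate at all: Taylor-expand $d(\cdot,\gamma^{\pm1}x)$ to second order at the single endpoint $x$, so that only the gradient at $x$ (controlled by $4\sqrt{C_1f(x)}$ as above) and a quadratic error $C_1 d(x,y)^2$ appear, then use $4\sqrt{C_1f(x)}\,d(x,y)\le 2f(x)+2C_1 d(x,y)^2$ and swap the roles of $x$ and $y$ for the reverse inequality. With these two repairs your argument becomes the correct proof; as written, the key estimate is asserted rather than proved and the assembly step fails quantitatively.
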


\begin{proof}
Note that $|b_c^0(x)-b_c^0(y)|\le 2d(x,y)$ since Busemann functions
are 1-Lipschitz.
Therefore we may assume that $d(x,y)$ is small, more precisely $d(x,y)\le L/2$.
Indeed, if $d(x,y)>L/2$ then
$|b_c^0(x)-b_c^0(y)|\le 2d(x,y)\le 4L^{-1}d(x,y)^2$,
so \eqref{e-busemann-estimate} is satisfied for any $C\ge 4L^{-1}$.
We may also assume that $f(x)\le 1$ and $f(y)\le 1$, otherwise
$$
|b_c^0(x)-b_c^0(y)|\le 2d(x,y) \le L \le L(f(x)+f(y)),
$$
so \eqref{e-busemann-estimate} is satisfied for any $C\ge L$.

Let $x_+=\gamma x$ and $x_-=\gamma^{-1}x$.
Then
$$
 d(x,x_+)=d(x,x_-) = d_\gamma(x) = L+f(x) .
$$
By Lemma~\ref{l-busemann-trivia}(\ref{i-busemann-shift}),
we have $b_c(x)=b_c(x_+)+L$ and
$b_c^-(x)=b_c^-(x_-)+L$.
Since Busemann functions are 1-Lipschitz,
it follows that
$$
 b_c(y)-b_c(x) = b_c(y) - b_c(x_+) - L \le d(y,x_+)-L
$$
and
$$
 b_c^-(y)-b_c^-(x) = b_c(y) - b_c^-(x_-) - L \le d(y,x_-)-L .
$$
Summing these two inequalities yields
\begin{equation}
\label{e-busemann1}
 b_c^0(y)-b_c^0(x) \le d(y,x_+)+d(y,x_-)-2L .
\end{equation}
Our plan is to estimate the right-hand side of this inequality.

For $p,q\in M$, denote by $\overrightarrow{pq}$
the initial velocity vector of the unique geodesic
connecting $p$ to $q$. That is, $\overrightarrow{pq}$
is the unit vector in $T_pM$ positively proportional
to $\exp_p^{-1}(q)$.
Let $v_+=\overrightarrow{xx_+}$, $v_-=\overrightarrow{xx_-}$
and $z\in M$ be any point such that $d(x,z)\le L/2$.
Then, by the first variation formula,
$$
\begin{aligned}
 d(z,x_\pm)&\le d(x,x_\pm) -\langle \overrightarrow{xz},v_\pm\rangle\cdot d(x,z) + C_1d(x,z)^2 \\
 &= L + f(x) -\langle \overrightarrow{xz},v_\pm\rangle\cdot d(x,z) + C_1d(x,z)^2
\end{aligned}
$$
and therefore
\begin{equation}
\label{e-1stvariation}
 d(z,x_+)+d(z,x_-)
\le 2L + 2f(x) -\langle \overrightarrow{xz},v_++v_-\rangle\cdot d(x,z) + 2C_1d(x,z)^2
\end{equation}
where $C_1>0$ is a constant depending only on $M$ and $L$.
(This constant is just an upper bound for the
second derivative of the distance to $x_\pm$
on the geodesic segment $[x,z]$. It is uniform in $x$
because $M$ admits a co-compact action by isometries
and $L\le d(x,x_\pm)\le L+1$.)

For $z=y$, \eqref{e-busemann1} and \eqref{e-1stvariation} imply that
\begin{equation}
\label{e-busemann2}
\begin{aligned}
 b_c^0(y)-b_c^0(x)
 &\le 2f(x) - \langle\overrightarrow{xy}, v_++v_-\rangle\cdot d(x,y) +2C_1 d(x,y)^2 \\
 &\le 2f(x) + \sigma\cdot d(x,y)+2C_1 d(x,y)^2 .
\end{aligned}
\end{equation}
where $\sigma={|v_++v_-|}$.

It remains to estimate $\sigma$.
Consider a point $z=\exp_x(t_0v)$ where
$v\in T_xM$ is the unit vector positively proportional
to $v_++v_-$ (or any unit vector if $v_++v_-=0$)
and $t_0=\sqrt{f(x)/C_1}$.
We may assume that $C_1$ is sufficiently large so that
$t_0\le L/2$ (recall that $f(x)\le 1$).
For the so defined $z$, \eqref{e-1stvariation} takes the form
$$
 d(z,x_+)+d(z,x_-) \le  2L+2f(x)-\sigma t_0 + 2C_1t_0^2 .
$$
On the other hand,
$$
d(z,x_+)+d(z,x_-)\ge d(x_+,x_-)=d_{\gamma^2}(x_-)\ge\min d_{\gamma^2} =2L
$$
by the triangle inequality and 
Lemma~\ref{l-axes-trivia}(\ref{i-mindm}) (recall that $x_+=\gamma^2x_-$).
Hence
$$
 0\le 2f(x)-\sigma t_0 + 2C_1t_0^2 = -\sigma\sqrt{f(x)/C_1} + 4f(x),
$$
or, equivalently, $\sigma\le 4\sqrt{C_1f(x)}$. Therefore
$$
 \sigma\cdot d(x,y) \le 4\sqrt{C_1f(x) d(x,y)^2} \le 2f(x) + 2C_1 d(x,y)^2 .
$$
Plugging this into \eqref{e-busemann2} yields
$$
 b_c^0(y)-b_c^0(x) \le 4f(x) + 4C_1 d(x,y)^2 
$$
and, by switching the roles of $x$ and $y$,
$$
 b_c^0(x)-b_c^0(y) \le 4f(y) + 4C_1 d(x,y)^2 .
$$
Therefore \eqref{e-busemann-estimate}
is satisfied for $C\ge\max\{4C_1,4\}$.
\end{proof}

The following lemma is purely analytic
and local. It does not use the assumption that
$M$ is co-compact and free of conjugate points.

\begin{lemma}\label{l-analytic}
Let $M$ be any Riemannian manifold, $f\in C^\infty(M)$
and $g\co M\to\R$ a continuous function such that
\begin{equation}
\label{e-gdiff}
 |g(x)-g(y)| \le C \big(d(x,y)^2 +|f(x)|+|f(y)|\big)
\end{equation}
for some constant $C>0$ and all $x,y\in M$.
Let $X=f^{-1}(0)$.
Then the set $g(X)\subset\R$ has zero Lebesgue measure.
\end{lemma}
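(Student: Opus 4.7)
The plan is to combine a Whitney-type extension of $g$ from $X$ with Sard's theorem, using the $C^\infty$ regularity of $f$ to upgrade a first-order extension to one of sufficient smoothness. First I would reduce to a local statement in $\R^n$: since Lebesgue null sets are closed under countable unions and the hypothesis is local, it suffices to work in a coordinate chart and assume $M$ is a bounded domain in $\R^n$.

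The key observation is that on $X=f^{-1}(0)$ the hypothesis reduces to $|g(x)-g(y)|\le C\,d(x,y)^2$, a H\"older estimate with exponent $2>1$. Assigning the constant $1$-jet $P_x(y)\equiv g(x)$ at each $x\in X$ satisfies the Whitney compatibility condition $|P_y(y)-P_x(y)|=|g(x)-g(y)|=O(d(x,y)^2)=o(d(x,y))$, so the Whitney extension theorem provides a function $\tilde g\in C^1(\R^n)$ with $\tilde g|_X=g|_X$ and $\nabla\tilde g=0$ on $X$. Consequently $g(X)=\tilde g(X)\subset \tilde g(\{\nabla\tilde g=0\})$.

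If $\tilde g$ were of class $C^k$ with $k\ge n$, Sard's theorem would immediately yield that $\tilde g$ sends its critical set to a Lebesgue null set, completing the proof. Since direct Whitney extension only gives $C^1$ regularity (and for $C^1$ maps $\R^n\to\R$ with $n\ge 2$ the images of critical sets may have positive measure, by Whitney's counterexample), I would enhance the construction using the smoothness of $f$: stratify $X$ by the order of vanishing of $f$. On the regular stratum $X_{\text{reg}}=X\cap\{df\neq 0\}$ the set $X$ is locally a smooth embedded hypersurface, hence has at most countably many connected components in any bounded region; the quadratic estimate forces $g$ to be locally constant on each such component (a $2$-H\"older function is constant on any rectifiable arc, since $\sum d(x_i,x_{i+1})^2\to 0$ under subdivision), so $g(X_{\text{reg}})$ is countable. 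On deeper critical strata, where $df$ and higher jets of $f$ vanish, use the jets of $f$ to define higher-order Whitney data for $g$, producing a locally $C^k$ extension to which Sard's theorem applies. The $C^r$ regularity of $f$ (with $r$ depending on $n$) controls the depth of this stratification and hence the smoothness available for Sard.

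The main obstacle is precisely the construction of the higher-order extension near critical points of $f$: verifying Whitney-compatibility on the critical strata and gluing the local extensions together requires delicate estimates tying the behavior of $g$ to the vanishing of $f$ and its derivatives. This is exactly where the dimension-dependent regularity threshold enters, consistent with the counterexamples of \cite{BIK} which show the analogue fails when $n\gg r$.
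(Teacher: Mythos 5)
Your chart reduction and your treatment of the regular stratum are fine: on $X\cap\{df\neq 0\}$ the set $X$ is a countable union of connected embedded hypersurfaces, \eqref{e-gdiff} degenerates there to a $2$-H\"older bound, and a $2$-H\"older function is constant along every rectifiable path, so $g$ takes only countably many values on that stratum. But there is a genuine gap exactly where you flag ``the main obstacle,'' and the mechanism you propose to fill it cannot work as stated. You suggest building higher-order Whitney data for $g$ on the deeper strata; however $g$ is merely continuous and \eqref{e-gdiff} controls only zeroth-order differences of $g$, so there are no candidate higher-order jets to assign and nothing to verify beyond first-order compatibility. Whitney extension with constant jets only ever produces a $C^1$ function, and for $n\ge 2$ Sard's theorem fails for $C^1$ maps $\R^n\to\R$ (Whitney's example, which you cite yourself), so the Sard route is blocked without a new quantitative input. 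Moreover the intermediate strata $\{j^kf=0,\ j^{k+1}f\neq 0\}$ with $1\le k<\infty$ need not be submanifolds (consider $f=x^2-y^2$), and the stratification does not terminate at finite depth: $f$ is $C^\infty$, and the set $Z$ where all derivatives of $f$ vanish can be large (it is all of $M$ when $f\equiv 0$), so ``depth controlled by $r$'' does not rescue the argument.

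The missing idea, which is the heart of the paper's proof, is a chaining estimate on $Z$ that uses the vanishing of $f$ \emph{off} of $X$: since $f$ and its derivatives up to order $2n$ vanish on $Z$, one has $|f(x)|\le C_1|x-z|^{2n}$ for $x$ near $z\in Z$; subdividing the straight segment from $z$ to $z'$ (which in general does not lie in $X$, so the $|f|$ terms in \eqref{e-gdiff} must be exploited rather than discarded) into roughly $\delta^{1-n}$ pieces of length about $\delta^{n}$, where $\delta=|z-z'|$, yields $|g(z)-g(z')|\le C_2|z-z'|^{n+1}$. The H\"older exponent $n+1>n$ gives $\dim_H\bigl(g(Z)\bigr)\le n/(n+1)<1$ directly, with no extension theorem and no Sard. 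The finite-order part $X\setminus Z$ is then handled not by Whitney data but by induction on the ambient dimension: it lies in countably many smooth hypersurfaces $\{f_I=0,\ df_I\neq 0\}$, the zero sets of partial derivatives of $f$ rather than of $f$ itself, and hypothesis \eqref{e-gdiff} restricts to each such hypersurface with $f$ replaced by its restriction, so the $(n-1)$-dimensional case of the lemma applies. Without the chaining estimate and this dimensional induction your outline does not close.
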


\begin{proof}
We argue by induction in $n=\dim M$.
The case $n=0$ is trivial. Assume that $n\ge 1$
and the lemma holds for all $(n-1)$-dimensional manifolds.

Since $M$ can be covered by countably many coordinate neighborhoods,
we may assume that $M$ is an open ball in $\R^n$
and $f$ extends to a smooth function on a compact set.
Let $Z$ be the set of points in $X$ where all derivatives of $f$ vanish,
and let $Y=X\setminus Z$.

The set $Y$ is contained in a countable union of $(n-1)$-dimensional
smooth submanifolds. Indeed, for each multi-index $I=(i_1,\dots,i_k)\in\{1,\dots,n\}^k$,
$k\ge 0$, consider the partial derivative $f_I=\frac{\pd^kf}{\pd x_1\dots\pd x_k}$.
Let $\Sigma_I$ be the set of points in $M$ where
the function $f_I$ vanish but its first derivative does not.
Since $f_I\in C^\infty$, $\Sigma_I$ is a smooth $(n-1)$-dimensional
submanifold, and $Y$ is contained in the union of all such submanifolds.

Applying the induction hypothesis to $\Sigma_I$ in place of $M$
yields that $g(X\cap\Sigma_I)$ is a set of measure zero for every multi-index $I$.
Therefore $g(Y)$ has measure zero.

It remains to handle the set $g(Z)$.
Since $f$ and all its derivatives up to the order $2n$
vanish on $Z$ and are bounded on $M$,
there is a constant $C_1>0$ such that
\begin{equation}
\label{e-high-order-vanish}
 |f(x)|=|f(x)-f(z)| \le C_1 |x-z|^{2n}
\end{equation}
for all $x\in M$ and $z\in Z$.
We are going to show that
\begin{equation}
\label{e-power-of-distance}
 |g(z)-g(z')| \le C_2 |z-z'|^{n+1}
\end{equation}
for some constant $C_2>0$ and all $z,z'\in Z$ such that $|z-z'|\le 1$.
Let $\delta=|z-z'|$ and pick a positive integer $N$
such that $\delta^{1-n}\le N\le 2\delta^{1-n}$.
Divide the segment $[z_1,z_2]$ into $N$ segments
$[x_{i-1},x_i]$, $i=1,\dots,N$, of equal lengths. Note that
$$
 |x_i-x_{i-1}| = \delta/N \le \delta^n
$$
by the choice of $N$, and
$$
 |f(x_i)|+|f(x_{i-1})| \le C_1|x_i-z|^{2n}+C_1|x_{i-1}-z|^{2n} \le 2C_1\delta^{2n}
$$
by \eqref{e-high-order-vanish}.
Substituting $x_i$ and $x_{i-1}$ for $x$ and~$y$
in \eqref{e-gdiff} yields that
$$
 |g(x_i)-g(x_{i-1})| \le C ( |x_i-x_{i-1}|^2 + |f(x_i)| + |f(x_{i-1})| )
 \le C(1+2C_1)\delta^{2n} .
$$
Hence
$$
 |g(z)-g(z')| = \bigl| \sum g(x_i)-g(x_{i-1}) \bigr|
 \le  C(1+2C_1)\delta^{2n} N \le 2C(1+2C_1)\delta^{n+1} .
$$
Thus \eqref{e-power-of-distance} holds for $C_2=2C(1+2C_1)$.

The inequality \eqref{e-power-of-distance} implies that
$$
 \dim_H(g(Z))\le\frac{\dim_H(Z)}{n+1}\le\frac{\dim_H(M)}{n+1}= \frac{n}{n+1}<1
$$
where $\dim_H$ denotes the Hausdorff dimension.
Hence the one-dimensional measure of $g(Z)$ is zero
and the lemma follows.
\end{proof}

\begin{cor}
\label{c-b0}
Let $c$ be an axis of $\gamma$.
Then $b_c^0=0$ on $A_\gamma$.
\end{cor}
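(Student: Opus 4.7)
The plan is to combine three ingredients: the non-negativity of $b_c^0$, invariance of $b_c^0$ along each axis of $\gamma$, and the measure-zero conclusion obtained by feeding Lemma~\ref{l-busemann-estimate} into Lemma~\ref{l-analytic}. The key new observation is that, because $A_\gamma$ is \emph{connected} (Lemma~\ref{l-connected}) and $b_c^0$ is continuous, the image $b_c^0(A_\gamma)$ is a connected subset of $\R$ — and a connected subset of $\R$ of zero Lebesgue measure must be a single point.

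First I would check that $b_c^0$ is constant on every axis $c_1$ of $\gamma$. By Lemma~\ref{l-busemann-trivia}(\ref{i-busemann-decay}), $b_c$ decays at unit rate along $c_1$: $b_c(c_1(t+t_1))=b_c(c_1(t))-t_1$. The reverse geodesic $\tilde c(s)=c(-s)$ is an axis of $\gamma^{-1}$, and $\tilde c_1(s)=c_1(-s)$ is also an axis of $\gamma^{-1}$, so the same lemma applied to $\gamma^{-1}$ gives $b_c^-(c_1(t+t_1))=b_c^-(c_1(t))+t_1$. Adding the two identities shows that $b_c^0$ is constant along $c_1$. Evaluating on $c$ itself, $b_c(c(t))=-t$ and $b_c^-(c(t))=t$, so $b_c^0 \equiv 0$ along $c$; hence $0 \in b_c^0(A_\gamma)$.

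Next, set $f=d_\gamma-L$, so $A_\gamma=f^{-1}(0)$ by Lemma~\ref{l-axes-trivia}(\ref{i-mind}). Lemma~\ref{l-busemann-estimate} tells us that $b_c^0$ satisfies inequality \eqref{e-gdiff} with this $f$, and $f \in C^\infty$. Therefore Lemma~\ref{l-analytic} applies and gives that $b_c^0(A_\gamma)$ has Lebesgue measure zero in $\R$.

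Finally, since $A_\gamma$ is connected by Lemma~\ref{l-connected} and $b_c^0$ is continuous, $b_c^0(A_\gamma)$ is a connected subset of $\R$, i.e.\ an interval. An interval of Lebesgue measure zero is a single point, and we already know $0$ belongs to this set, so $b_c^0(A_\gamma)=\{0\}$. The only step that is not completely mechanical is recognizing that the measure-zero conclusion, innocuous on its own, becomes sharp once combined with the connectedness of $A_\gamma$; this is the entire point of having proved Lemma~\ref{l-connected} in the earlier section.
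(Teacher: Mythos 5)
Your proof is correct and follows essentially the same route as the paper: feed Lemma~\ref{l-busemann-estimate} into Lemma~\ref{l-analytic} to get that $b_c^0(A_\gamma)$ has measure zero, then use connectedness of $A_\gamma$ (Lemma~\ref{l-connected}) to conclude the image is a single point, pinned to $0$ by evaluating on $c$. Your first paragraph (constancy of $b_c^0$ along every axis) is more than needed — the paper simply observes $b_c^0(c(0))=0$ — but it is correct and harmless.
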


\begin{proof}
Let $L=\min d_\gamma$, then
Lemma~\ref{l-busemann-estimate} asserts that the functions $f=d_\gamma-L$
and $g=b_c^0$ satisfy the assumptions of Lemma~\ref{l-analytic}.
Since $A_\gamma=f^{-1}(0)$, Lemma~\ref{l-analytic} implies that
$b_c^0(A_\gamma)$ is a set of measure zero in $\R$.
By Lemma~\ref{l-connected}, $A_\gamma$ and hence $b_c^0(A_\gamma)$ are connected.
Since the set $b_c^0(A_\gamma)$ is connected and has measure zero,
it is a single point on the real line. This means that $b_c^0$ is constant on $A_\gamma$.
Since $c(0)\in A_\gamma$ and $b_c^0(c(0))=0$, this constant is zero.
\end{proof}

\begin{proof}[Proof of Proposition~\ref{p-const-difference}]
Let $c$ and $c_1$ be axes of $\gamma$.
Changing the origin of $c_1$ we may assume that $b_c(c_1(0))=0$.
Then we have to prove that $b_c=b_{c_1}$.

First we show that the relation $b_c(c_1(0))=0$ is symmetric,
i.e.\ it implies that $b_{c_1}(c(0))=0$.
Since $c_1(0)\in A_\gamma$, we have $b_c^0(c_1(0))=0$ by Corollary~\ref{c-b0},
hence $b_c^-(c_1(0))=0$. This means that
$$
 d(c_1(0),c(-t)) = t +\ep(t)
$$
where $\ep(t)\to 0$ as $t\to+\infty$.
Since $b_{c_1}(c(-t)) \le d(c(-t),c_1(0))$,
it follows that $b_{c_1}(c(-t)) \le t+\ep(t)$.
This and Lemma~\ref{l-busemann-trivia}(\ref{i-busemann-decay}) imply that
$$
 b_{c_1}(c(0)) = b_{c_1}(c(-t)) - t \le \ep(t)
$$
for all $t>0$.
Therefore $b_{c_1}(c(0))\le 0$. Similarly, $b_{c_1}^-(c(0))\le 0$.
Both inequalities must turn to equalities because $b_{c_1}+b_{c_1}^-\ge 0$.
Thus $b_{c_1}(c(0))=0$ as claimed.

Let $x\in M$ and $\ep>0$. By the definition
of Busemann function, there exists $t_1\in\R$ such that
$$
 d(x,c_1(t_1)) < b_{c_1}(x) + t_1 + \ep .
$$
Since $b_c(c_1(0))=0$, Lemma~\ref{l-busemann-trivia}(\ref{i-busemann-decay})
implies that $b_c(c_1(t_1))=-t_1$,
hence
$$
 d(c_1(t_1),c(t_0)) < t_0-t_1+\ep 
$$
for a sufficiently large $t_0\in\R$. Therefore
$$
 d(x,c(t_0)) \le d(x,c_1(t_1))+d(c_1(t_1),c(t_0))<b_{c_1}(x) + t_0 + 2\ep
$$
and hence $b_c(x)\le b_{c_1}(x)$ since $\ep$ is arbitrary.

Swapping $c$ and $c_1$ in this argument yields that $b_{c_1}(x)\le b_c(x)$.
Thus $b_c=b_{c_1}$ and the proposition follows.
\end{proof}

\section{Virtual splitting of centralizer}
\label{sec-proof-main}

\begin{lemma}\label{l-busemann-shift-by-commutator}
Let $\alpha\in Z(\gamma)$ and $c$ be an axis of $\gamma$.
Then $b_c(\alpha x)-b_c(x)$ does not depend on $x\in M$.
\end{lemma}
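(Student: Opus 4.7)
The plan is to reduce this to Proposition~\ref{p-const-difference} by exhibiting a second axis of $\gamma$ related to $c$ by the isometry $\alpha$. First I would observe that, since $\alpha$ commutes with $\gamma$, the geodesic $\alpha c$ is again an axis of $\gamma$: indeed,
$$
 \gamma(\alpha c(t)) = \alpha(\gamma c(t)) = \alpha c(t+L),
$$
where $L=\min d_\gamma$, so $\gamma$ translates $\alpha c$ by~$L$.

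Next I would apply Proposition~\ref{p-const-difference} to the two axes $c$ and $\alpha c$ of $\gamma$ to conclude that $b_c - b_{\alpha c}$ is a constant function on $M$; call this constant $k$.

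Finally I would invoke the naturality of Busemann functions under isometries, which was recorded in Section~\ref{s:prelim}: for any isometry $\alpha$, $b_c(x) = b_{\alpha c}(\alpha x)$. Substituting this into the difference yields
$$
 b_c(\alpha x) - b_c(x) = b_c(\alpha x) - b_{\alpha c}(\alpha x) = k
$$
for every $x\in M$, which is exactly the claim.

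There is essentially no obstacle here: the content is entirely packaged inside Proposition~\ref{p-const-difference}. The only thing to be careful about is the direction of the isometry-equivariance identity for Busemann functions (one must apply $\alpha$ to both the geodesic and the evaluation point), but this was stated explicitly in the preliminaries.
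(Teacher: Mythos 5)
Your proof is correct and is essentially the paper's argument: the paper likewise notes that the $\alpha$-translate of $c$ (it uses $\alpha^{-1}c$ rather than $\alpha c$, an immaterial difference) is an axis of $\gamma$, applies Proposition~\ref{p-const-difference}, and concludes via the equivariance identity for Busemann functions.
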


\begin{proof}
This is \cite[Corollary 2.6]{CS} without the analyticity assumption.
Observe that $\alpha^{-1} c$ is an axis of $\gamma$, indeed,
if $L=\min d_\gamma$ then
$$
 \gamma\alpha^{-1} c(t) = \alpha^{-1}\gamma c(t) = \alpha^{-1} c(t+L) .
$$
Therefore $b_{\alpha^{-1}c}-b_c$ is constant by Proposition~\ref{p-const-difference}.
Since $b_{\alpha^{-1}c}(x)=b_c(\alpha x)$, the lemma follows.
\end{proof}

\begin{cor}\label{c-homomorphism}
There exist a homomorphism $h\co Z(\gamma)\to\R$
such that $h(\gamma)\ne 0$.
\end{cor}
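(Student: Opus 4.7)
The plan is to define $h$ directly from the Busemann function of a fixed axis of $\gamma$. Concretely, fix an axis $c$ of $\gamma$ and for $\alpha\in Z(\gamma)$ set
\[
 h(\alpha) := b_c(x) - b_c(\alpha x),
\]
for any choice of $x\in M$. The key point is that Lemma~\ref{l-busemann-shift-by-commutator} guarantees this definition is independent of the choice of $x$, so $h\co Z(\gamma)\to\R$ is a well-defined function.

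Next I would verify the homomorphism property. Given $\alpha,\beta\in Z(\gamma)$, pick any $x\in M$ and write
\[
 h(\alpha\beta) = b_c(x)-b_c(\alpha\beta x) = \bigl(b_c(x)-b_c(\beta x)\bigr) + \bigl(b_c(\beta x)-b_c(\alpha(\beta x))\bigr).
\]
The first bracketed term equals $h(\beta)$ by definition, while the second equals $h(\alpha)$ because $h(\alpha)$ can be evaluated at the point $\beta x$. Hence $h(\alpha\beta)=h(\alpha)+h(\beta)$.

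Finally, to see $h(\gamma)\ne 0$, I would just apply Lemma~\ref{l-busemann-trivia}(\ref{i-busemann-shift}): setting $\alpha=\gamma$ gives $b_c(\gamma x) = b_c(x)-\min d_\gamma$, so $h(\gamma) = \min d_\gamma > 0$ by Lemma~\ref{l-axes-trivia}(\ref{i-mind}). All the real work is already contained in Proposition~\ref{p-const-difference} (which was used to prove Lemma~\ref{l-busemann-shift-by-commutator}); the corollary itself is essentially a routine bookkeeping of the additive character thereby produced, so I do not expect any obstacle beyond what was resolved in the previous section.
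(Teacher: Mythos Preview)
Your proof is correct and is essentially identical to the paper's: the paper defines $h(\alpha)=b_c(\alpha x)-b_c(x)$ (so your $h$ is just the negative of theirs), checks the homomorphism property by the same telescoping, and obtains $h(\gamma)=-\min d_\gamma\ne 0$ from Lemma~\ref{l-busemann-trivia}(\ref{i-busemann-shift}). The sign difference is immaterial.
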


\begin{proof}
Let $c$ be an axis of $\gamma$.
For $\alpha\in Z(\gamma)$, define
$$
 h(\alpha) = b_c(\alpha x)-b_c(x)
$$
where $x\in M$ is an arbitrary point.
By Lemma~\ref{l-busemann-shift-by-commutator},
$h(\alpha)$ is well defined.
For $\alpha,\beta\in Z(\gamma)$ we have
$$
 h(\alpha\beta) =  b_c(\alpha\beta x)-b_c(x)
 = b_c(\alpha\beta x)-b_c(\beta x)+b_c(\beta x)-b_c(x)
 = h(\alpha)+h(\beta).
$$
Thus $h$ is a homomorphism.
By Lemma~\ref{l-busemann-trivia}(\ref{i-busemann-shift}),
$h(\gamma)=-\min d_\gamma\ne 0$.
\end{proof}

Recall that $H_1(Z(\gamma))=Z(\gamma)/[Z(\gamma),Z(\gamma)]$.
We denote by $\pi$ the projection from $Z(\gamma)$
to $H_1(Z(\gamma))$.

\begin{cor}\label{c-infinite-order}
$\pi(\gamma)$ is non-torsion (i.e. has infinite order) in $H_1(Z(\gamma))$.
\end{cor}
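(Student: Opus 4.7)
The plan is to deduce this immediately from Corollary~\ref{c-homomorphism} by using the universal property of abelianization together with the fact that $\R$ is torsion-free.

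First I would observe that since $\R$ is abelian, the homomorphism $h\co Z(\gamma)\to\R$ produced by Corollary~\ref{c-homomorphism} kills the commutator subgroup $[Z(\gamma),Z(\gamma)]$, and therefore factors through the projection $\pi\co Z(\gamma)\to H_1(Z(\gamma))$. So there is an induced homomorphism $\bar h\co H_1(Z(\gamma))\to\R$ with $\bar h\circ\pi=h$. In particular $\bar h(\pi(\gamma))=h(\gamma)\ne 0$.

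Next I would argue by contradiction: suppose $\pi(\gamma)$ has finite order $n\ge 1$ in $H_1(Z(\gamma))$. Then $n\cdot\pi(\gamma)=0$, so applying $\bar h$ gives $n\cdot\bar h(\pi(\gamma))=0$ in $\R$. Since $\R$ is torsion-free this forces $\bar h(\pi(\gamma))=0$, contradicting the previous paragraph.

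There is really no obstacle here; the content of the corollary is entirely packaged in Corollary~\ref{c-homomorphism}, and this step is just the standard observation that any homomorphism to a torsion-free abelian group detects non-torsion elements of the abelianization of the domain.
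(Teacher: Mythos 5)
Your argument is correct and is essentially identical to the paper's proof: both factor $h$ through the abelianization using the commutativity of $\R$ and then note that $\bar h(\pi(\gamma))=h(\gamma)\ne 0$ rules out torsion. You merely spell out the final torsion-free step a bit more explicitly than the paper does.
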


\begin{proof}
Since $\R$ is commutative, the homomorphism $h$
from Corollary~\ref{c-homomorphism} factors
as $h=\bar h\circ\pi$ where $\bar h$ is a homomorphism
from $H_1(Z(\gamma))$ to $\R$.
Since $\bar h(\pi(\gamma))\ne 0$,
$\pi(\gamma)$ has infinite order.
\end{proof}

\begin{proof}[Proof of Theorem~\ref{t-main}]
Recall that by Remark~\ref{centralizer-fg} $Z(\gamma)$ is finitely generated and hence so is  $H_1(Z(\gamma))$.
By Corollary~\ref{c-infinite-order}
and the classification of finitely generated abelian groups,
$\pi(\gamma)$ belongs to a finite index subgroup $H<H_1(Z(\gamma))$
which is isomorphic to $\Z^k$ so that $\pi(\gamma)$ is mapped
to $(1,0,\dots,0)\in\Z^k$ by this isomorphism.
Let $p\co H\cong\Z^k\to\Z$ be first coordinate projection.
Then $G:=\pi^{-1}(H)$ is a finite index subgroup of $Z(\gamma)$
and $\phi=p\circ\pi\co G\to\Z$ is a homomorphism sending $\gamma$
to $1\in\Z$. The existence of such a homomorphism and the fact
that $\gamma$ belongs to the center of $G$ imply
that $G\cong\Z\times G'$
where the $\Z$ factor corresponds to
the subgroup generated by $\gamma$ and $G'=\ker \phi$.
\end{proof}

\section{Examples}\label{s:examples}
In this section we will give some new examples of manifolds that can
be shown not to admit metrics without conjugate points using Theorem~\ref{t-main}.

\begin{example}\label{ex-center}
Let $S^2_g$ be a closed orientable surface of genus $g>1$ equipped with any Riemannian metric.
Let $\bar M^3=T^1S^2_g$ be the unit tangent bundle to $S^2_g$.
Then $\bar M^3$ does not admit a metric without conjugate points.

Indeed, since the Euler class of $S^2_g$ is not equal to zero, $\pi_1(\bar M)$
is a nontrivial central extension $1\to\Z\to \pi_1(\bar M)\to \pi_1(S^2_g)\to 1$.
By Theorem~\ref{t-main} it does not admit a metric without conjugate points.
On the other hand, $\pi_1(\bar M)$ is semi-hyperbolic~\cite{AB95} and therefore it satisfies
the previously known restrictions on fundamental groups
of manifolds without conjugate points proved in~\cite{CS},
namely, every abelian subgroup of $\pi_1(\bar M)$ is straight
and every solvable subgroup of $\pi_1(\bar M)$ is virtually abelian.
\end{example}

Recall that the key step in the proof of Theorem ~\ref{t-main} is Proposition~\ref{p-const-difference}.
It  was proved in ~\cite{CS} under the assumption  that $A_\gamma$ is locally rectifiably path connected.
As was discussed earlier this can be guaranteed if the metric is real analytic.
Another condition that obviously ensures it is if $\gamma$ belongs to the center of  $\pi_1(\bar M)$.
Indeed, if $\g\in Z(\pi_1(\bar M))$ then $d_\g$ descends to a well-defined function on $M$
and hence it is bounded above and attains its maximum.
By Lemma~\ref{l-axes-trivia}(\ref{l-axes-crit}) this means that $\max d_\g=\min d_\g$,
i.e.  $d_\g$ is constant.
That means that $A_\g=M$ which is obviously rectifiably path connected.
Therefore one can prove that $\bar M$ admits no metric without conjugate points
without using Proposition~\ref{p-const-difference}.

Next we will give an example which has no finite index subgroups with nontrivial center and requires
the full strength of Proposition~\ref{p-const-difference} and Theorem~\ref{t-main}.

\begin{example}
First let us recall some basic facts about diffeomorphism groups of surfaces. 
Let $F$ be a closed orientable surface of genus $>1$.
Then  the mapping class group $\Mod(F)$ is defined as $\pi_0(\Diff(F))$.
It can be alternatively described as $\Diff(F)/\Diff_0(F))$ where
$\Diff_0(F)$ is the identity component of $\Diff(F)$.
By a classical result of Dehn and Nielsen $\Mod(F)$ is isomorphic
to the group $\Out(\pi_1(F))=\Aut(\pi_1(F))/\Inn(\pi_1(F))$
of outer automorphisms of $\pi_1(F)$.
For a once-punctured surface $F\setminus\{pt\}$, we have
$\Mod(F\setminus\{pt\})\cong \Aut(\pi_1(F))$.

It is well known that $\Diff_0(F)$ is contractible~\cite{EE}.

\begin{lemma}\label{l-injective-rho}
Let $S_g^2$ be a closed orientable surface of genus $g>1$,
$\bar M^3=T^1S_g$ and $F=S_g\# S_g$.
Then there exists a monomorphism $\rho\co\pi_1(\bar M)\to \Mod (F)$
which admits a lift to a monomorphism $\bar\rho\co\pi_1(\bar M)\to \Mod(F\setminus\{pt\})$.
\end{lemma}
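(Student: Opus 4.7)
The plan is to construct $\bar\rho$ as a composition
\begin{equation*}
\pi_1(\bar M)\xrightarrow{\sim} H\hookrightarrow \Mod(S_{g,1})\longrightarrow \Mod(F\setminus\{pt\}),
\end{equation*}
where $S_{g,1}$ denotes the compact surface of genus $g$ with one boundary circle, viewed as one of the two pieces in the decomposition $F=S_g\#S_g$, and $H\subset \Mod(S_{g,1})$ is an explicit subgroup isomorphic to $\pi_1(T^1S_g)=\pi_1(\bar M)$. Then $\rho$ will be the further composition with the forgetful map $\Mod(F\setminus\{pt\})\to\Mod(F)$, so that $\bar\rho$ lifts $\rho$ by construction.

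The first task is to identify $\pi_1(\bar M)$ with a subgroup of $\Mod(S_{g,1})$. The standard way is to combine two classical exact sequences. Fix a basepoint $p\in S_g$. The Birman exact sequence
\begin{equation*}
1\to\pi_1(S_g,p)\to\Mod(S_g,p)\to\Mod(S_g)\to 1
\end{equation*}
realizes $\pi_1(S_g)$ as the point-pushing subgroup of $\Mod(S_g,p)$. Capping the boundary of $S_{g,1}$ with a disk centered at $p$ yields
\begin{equation*}
1\to\Z\to\Mod(S_{g,1})\to\Mod(S_g,p)\to 1,
\end{equation*}
whose central $\Z$ is generated by the Dehn twist along $\partial S_{g,1}$. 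Letting $H$ be the kernel of the composition $\Mod(S_{g,1})\to\Mod(S_g,p)\to\Mod(S_g)$ gives a central extension $1\to\Z\to H\to\pi_1(S_g)\to 1$. Its Euler class in $H^2(\pi_1(S_g);\Z)$ agrees up to sign with the Euler class of the unit tangent circle bundle $T^1S_g\to S_g$, so by the classification of central extensions $H\cong\pi_1(T^1S_g)=\pi_1(\bar M)$.

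Next, decompose $F=S_{g,1}\cup_\partial S'$, where $S'$ is a second copy of $S_g$ with an open disk removed, and choose a marked point $q$ in the interior of $S'$. Extension by the identity on $S'$ sends a mapping class of $S_{g,1}$ to a diffeomorphism of $F$ fixing $S'$ pointwise; in particular it fixes $q$, yielding homomorphisms $\Mod(S_{g,1})\to\Mod(F,q)\cong\Mod(F\setminus\{q\})$ and $\Mod(S_{g,1})\to\Mod(F)$. Composing with the inclusion $H\hookrightarrow\Mod(S_{g,1})$ produces $\bar\rho$ and $\rho$ respectively.

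The main step, and the main obstacle, is injectivity of $\rho$; injectivity of $\bar\rho$ then follows for free, since $\rho$ factors through $\bar\rho$. I would invoke the standard subsurface-inclusion theorem for mapping class groups (see, e.g., Farb--Margalit, Theorem~3.18): if $\Sigma\subset F$ is a compact essential subsurface such that no component of $F\setminus\Sigma$ is a disk or annulus, then the extension-by-identity map $\Mod(\Sigma)\to\Mod(F)$ is injective. Here $F\setminus S_{g,1}$ has a single component, namely a genus $g$ surface with one boundary, and since $g\ge 2$ this is neither a disk nor an annulus, so the theorem applies. Restricting to $H\subset\Mod(S_{g,1})$ gives the required injectivity of $\rho$, completing the construction.
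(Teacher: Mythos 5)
Your construction is correct and is essentially the Mess construction used in the paper: in both arguments $\pi_1(\bar M)$ is realized as the kernel of $\Mod(S_{g,1})\to\Mod(S_g)$ (the paper's $\pi_0(\Diff(S^+,\partial S^+))\to\pi_0(\Diff(S_g))$), and then mapped into $\Mod(F)$, resp.\ $\Mod(F\setminus\{pt\})$, by extending diffeomorphisms by the identity over the second copy of the genus-$g$ piece. The one place you genuinely diverge is in identifying that kernel with $\pi_1(T^1S_g)$: the paper reads off the injection $\pi_1(\bar M)\hookrightarrow\pi_0(\Diff(S_g,v))\cong\Mod(S_{g,1})$ from the long exact homotopy sequence of the evaluation fibration $\Diff(S_g,v)\to\Diff(S_g)\to T^1S_g$ together with Earle--Eells, whereas you assemble it from the Birman and capping exact sequences and then match Euler classes of central extensions of $\pi_1(S_g)$ by $\Z$. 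Both routes are valid and both ultimately rest on the contractibility of $\Diff_0(S_g)$; yours trades the fibration argument for an Euler-class computation that you quote rather than carry out (it is the standard identification of the disk-pushing subgroup with $\pi_1(T^1S_g)$), but in return it makes the injectivity of the extension-by-identity map $\Mod(S_{g,1})\to\Mod(F)$ explicit via the subsurface-inclusion theorem, a point the paper only asserts.
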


\begin{proof}
Pick $p\in S_g$ and $v\in T^1_pS_g$. Let $\Diff(S_g,v)$ be the subgroup of $\Diff (S_g)$ fixing $v$.
Consider the fibration  $\Diff(S_g,v)\to\Diff(S_g)\overset{ev}{\to}\bar M^3$ where the last map is just
the evaluation map on $v$. Look at the long exact homotopy sequence of this fibration
 \[
 \ldots\to\pi_1(\Diff (S_g))\to \pi_1(\bar M^3)\overset{i}{\to} \pi_0 (\Diff(S_g,v))\to \pi_0 (\Diff(S_g))
 \]
 
As mentioned earlier $\Diff_0 (S_g)$ is contractible; therefore $\pi_1(\Diff (S_g))=1$ and hence
$\pi_1(\bar M^3)$ injects as a subgroup into $\pi_0 (\Diff(S_g,v))$.
Next choose an $\ep>0$ much smaller than the injectivity radius of $S_g$
and let $S^+=S_g\setminus B_\ep(p)$.
Then $\Diff(S_g,v)$ is naturally isomorphic to $\Diff(S^+,\partial S^+)$,
the group of diffeomorphisms of $S^+$ fixing the boundary $\partial S^+$ pointwise.
Let $F=S^+\cup_{\partial S^+}S^-$ where $S^-$ is another copy of $S^+$
(i.e. $F$  is the double of $S^+$ along its boundary).
Then $\Diff(S^+,\partial S^+)$  naturally embeds into $\Diff (F)$
by extending the diffeomorphisms to $S^-$ by identity.
This induces a map $j\co \pi_0(\Diff(S^+,\partial S^+))\to \pi_0 (\Diff (F))$ which is also injective.
Then $\rho=j\circ i\co \pi_1(\bar M)\to \Mod (F)$ is injective.

The above construction (originally due to Mess~\cite{Mess90})
is borrowed from \cite[Section 4.1]{KaLe},
see also ~\cite{Birman74}.

Next observe that if instead of gluing $S^-$ to $S^+$ we glue
$S^-\setminus \{pt\}$ the same argument gives an embedding
$\pi_1(\bar M)\to \Mod (F\setminus \{pt\})$
so that the above map $\rho\co  \pi_1(\bar M)\to \Out(\pi_1(F))$ lifts to a map 
$\bar\rho \co \pi_1(\bar M)\to \Mod (F\setminus \{pt\})$ and both of these are injections.
\end{proof}

Let $B\Diff(F)=E/\Diff(F)$ be the classifying space of $\Diff(F)$
(here $E$ is contractible).
Then the projection map  $p\co E\to B\Diff(F)$ factors through the map
$$
E\to E/\Diff_0(F)\overset{\bar p}{\to} E/\Diff(F)= B\Diff(F)
$$
where $\bar p$ is just the quotient by $\Diff(F)/\Diff_0(F))=\Mod(F)$.
Since $\Diff_0(F)$ is contractible then so is $E/\Diff_0(F)$ and therefore
$B\Diff(F)$ can be identified with $B\Mod(F)$ which we will do from now on.
Therefore, for any CW complex $B$ any map $f\co B\to B\Mod(F)$
(which up to homotopy is determined by the map on the fundamental groups)
gives rise to a pullback bundle $\Diff(F)\to X\to B$ and
its associated bundle $F\to N\to B$
coming from the natural action of $\Diff(F)$ on $F$.

Now consider the map $\bar M \to B\Mod (F)$ corresponding to the monomorphism 
$\rho\co\pi_1(\bar M)\to\Mod(F)$ from Lemma~\ref{l-injective-rho}
and pull back the universal $F$ bundle.
This gives us a $5$-manifold $N$ fibering over $\bar M$ with fiber $F$.
Clearly $N$ is aspherical and  its fundamental groups fits into a short
exact sequence
$$
1\to \pi_1(F)\to\pi_1(N)\to\pi_1(\bar M)\to 1 .
$$
Since $\rho$ can be lifted to $\bar\rho\co\pi_1(\bar M)\to\Mod(F\setminus\{pt\})\cong\Aut(\pi_1(F))$,
this exact sequence  splits, i.e.\
$\pi_1(N)$ is isomorphic to the semidirect product $\pi_1(F)\rtimes_{\bar\rho}\pi_1(\bar M)$.

Since $\bar \rho$ is injective,  $\pi_1(N)$ has trivial center.
On the other hand it contains $\pi_1(\bar M)$ as a subgroup and therefore
it has an element whose centralizer does not virtually split.
Therefore, $N$ does not admit a metric without conjugate points by Theorem~\ref{t-main}.
Lastly note that since $\rho$ is injective, the whole group $\pi_1(N)$ injects into $\Aut(\pi_1(F))$
via the conjugation action. Also recall that $\Aut(\pi_1(F))\cong \Mod (F\setminus \{pt\})$.
It is well known  that  mapping class groups of hyperbolizable surfaces satisfy the property
that all their abelian subgroups are straight (see~\cite{FLM2001} and \cite[Lemma 8.7]{Ivanov92})
and all their solvable subgroups are virtually abelian ~\cite{BLM83}.
Of course the same is therefore  true for all their subgroups and hence it is true for $\pi_1(N)$.
\end{example}

\section{Solvability of word and conjugacy problems}\label{s:solv}
It is very well known that fundamental groups of manifolds of nonpositive curvature have solvable word and conjugacy problems. We observe that the same is true for fundamental groups of closed manifolds without conjugate points.

\begin{theorem}\label{t-solv-word}
Let $\bar M$ be a closed manifold with a  Riemannian metric without conjugate points. Then $\pi_1(\bar M)$ has solvable word and conjugacy problems.

\end{theorem}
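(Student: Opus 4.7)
The plan is to handle the word and conjugacy problems separately; in both cases the problem reduces to a finite search in $\Gamma$ controlled by the geometry of the action on $M$.

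For the word problem, fix $x_0\in M$ and a finite generating set $S\subset\Gamma$. Since $\Gamma$ acts freely and properly discontinuously on $M$, the number $s_0:=\min\{d(x_0,\gamma x_0):\gamma\in\Gamma\setminus\{e\}\}$ is positive, attained, and in fact equal to twice the injectivity radius of $\bar M$ at the image of $x_0$ (because $M$ has no conjugate points), hence computable from the metric. Given a word $w$ in $S$, I would compute $w\cdot x_0\in M$ by successively applying the generators (each realized as an explicit isometry of $M$) and approximate $d(x_0,w\cdot x_0)$ to precision less than $s_0/3$ using the smooth metric; then $w=e$ in $\Gamma$ iff this approximation is less than $s_0/2$.

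For the conjugacy problem, the translation length $|\gamma|:=\min d_\gamma$ is a conjugacy invariant by Lemma~\ref{l-axes-trivia}, so if $|\alpha|\ne|\beta|$ then $\alpha$ and $\beta$ are not conjugate. Assume $|\alpha|=|\beta|$. The strategy is to produce a computable bound $R=R(\alpha,\beta)$ such that $\alpha$ and $\beta$ are conjugate iff some $\eta\in\Gamma$ with $d(x_0,\eta x_0)\le R$ satisfies $\eta\alpha\eta^{-1}=\beta$. The ball of radius $R$ meets the orbit $\Gamma\cdot x_0$ in a finite set that can be enumerated using the word problem, and the equation $\eta\alpha\eta^{-1}=\beta$ is itself an instance of the word problem, so conjugacy reduces to the word problem together with the bound $R$.

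To construct $R$, suppose $\alpha=\eta\beta\eta^{-1}$. A direct computation gives $\eta\cdot A_\beta=A_\alpha$. Let $p_\beta\in A_\beta$ be a closest point to $x_0$. By the proof of Lemma~\ref{l-connected} and Remark~\ref{centralizer-fg}, the quotient $\bar A_\alpha=A_\alpha/Z(\alpha)$ is compact, so one can choose a compact fundamental domain $F\subset A_\alpha$ for the $Z(\alpha)$-action containing a point $p_\alpha\in A_\alpha$ closest to $x_0$. Multiplying $\eta$ by a suitable $z\in Z(\alpha)$, we may arrange $z\eta p_\beta\in F$, and then
\[
 d(x_0,z\eta x_0)\le d(x_0,p_\alpha)+\diam(F)+d(x_0,p_\beta).
\]
The main obstacle is making this bound effectively computable from $\alpha$ and $\beta$: one must algorithmically estimate $d(x_0,A_\gamma)$ and $\diam(F)$ in terms of the displacement $d_\gamma(x_0)$, the translation length $|\gamma|$, and global geometric data of $\bar M$. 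Both tasks reduce to a quantitative version of the proof of Lemma~\ref{l-connected}, namely that the sublevel sets $\bar U_\epsilon$ of the proper function $\bar d_\gamma$ on $M/Z(\gamma)$ are compact with diameter bounded in terms of $\epsilon$ and the injectivity radius of $\bar M$.
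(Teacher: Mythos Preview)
Your approach to the word problem has a genuine gap: solvability of the word problem is a purely combinatorial property of the finitely presented group --- there must be a Turing machine that, given a word in the generators, decides whether it represents the identity. Your algorithm instead presupposes oracle access to the Riemannian metric: ``compute $w\cdot x_0$'' and ``approximate $d(x_0,w\cdot x_0)$ to precision $s_0/3$'' are not operations a Turing machine can perform on a word; they require numerically integrating geodesic equations for a metric that is not part of the input and need not be computable in any sense. The paper avoids this entirely by proving a geometric lemma (any two freely homotopic loops of length $<C$ are homotopic through loops of length $<C$, using that sublevel sets of $d_\gamma$ are connected) and deducing from it an \emph{exponential} upper bound on the Dehn function of $\pi_1(\bar M)$; a recursive Dehn function is the standard criterion for solvability of the word problem and is a statement about the group presentation alone.

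Your conjugacy argument is incomplete by your own admission: you reduce everything to effectively bounding $d(x_0,A_\gamma)$ and the diameter of a $Z(\alpha)$-fundamental domain in $A_\alpha$ in terms of word-length data, but you do not prove such bounds exist, and there is no obvious reason they should follow from a ``quantitative Lemma~\ref{l-connected}'' --- the displacement function $\bar d_\gamma$ has no a priori control on how slowly it grows away from its minimum. (Also, comparing translation lengths $|\alpha|=|\beta|$ is itself a real-number equality test, not decidable from finite data.) The paper's route is cleaner: once the bounded-length-homotopy lemma and solvability of the word problem are in hand, the conjugacy problem follows directly from a general result in Bridson--Haefliger. In effect, the lemma about short homotopies \emph{is} the missing ``short conjugator'' bound you are looking for, and it is what your argument lacks.
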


\begin{proof}
The proof is a straightforward combination of known results.
The key is the following Lemma.

\begin{lemma}\label{l-fil-length}
Let $c_0,c_1\co S^1\to\bar M$ be two freely homotopic rectifiable loops with length $L(c_i)<C$ for $i=0,1$.
Then there exists a free homotopy $F\co S^1\times [0,1]\to\bar M$ from $c_0$ to $c_1$ through loops $c_t$
such that $L(c_t)<C$ for any $t\in [0,1]$.
\end{lemma}

\begin{proof}
Lifting the homotopy to the universal cover $M$ we get a homotopy
between two curves $\tilde c_0,\tilde c_1\co[0,1]\to M$.
Since the metric has no conjugate points,
it is easy to connect each $\tilde c_i$, $i=1,2$, to the (unique) geodesic
segment $[\tilde c_i(0),\tilde c_i(1)]$ by a homotopy
fixing the endpoints and not increasing lengths. (For example, consider
a family of curves $\{\tilde c_{i,t}\}, t\in[0,1]$, where $\tilde c_{i,t}$
is the concatenation of the geodesic segment $[\tilde c_i(0),\tilde c_i(t)]$
and the arc $\tilde c_i|_{[t,1]}$ of $\tilde c_i$.)
Thus we may assume that $\tilde c_0$ and $\tilde c_1$ are geodesics.

Let $\gamma$ be the element of the deck transformation group such that
$\gamma(\tilde c_0(0))=\tilde c_0(1)$,
then $\gamma(\tilde c_1(0))=\tilde c_1(1)$.
Thus by our assumption both $\tilde c_0(0)$ and $\tilde c_1(0)$ belong to the set $\{d_\g<C\}$.
By Lemma~\ref{l-axes-trivia}(\ref{l-axes-crit}) and the proof of Lemma~\ref{l-connected}
the set $\{d_\g<C\}$ is path connected
and therefore we can find a path $\alpha\co [0,1]\to \{d_\g< C\}$ such that
$\alpha(0)=\tilde c_0(0)$ and $\alpha(1)=\tilde c_1(0)$.
Connect $\alpha(s)$ to $\gamma(\alpha(s))$ by shortest geodesics for all $s\in [0,1]$.
They are unique and  vary continuously since $M$ has no conjugate points.
Projecting this family of geodesics to $\bar M$ produces a homotopy with desired properties.
\end{proof}

We are indebted to Ilya Kapovich for providing the following argument that the property
provided by Lemma~\ref{l-fil-length} implies  solvability of the word problem in  $\pi_1(\bar M)$.
Since the argument is apparently well-known we only give a brief outline.
 
 It is well-known that to prove solvability of the word problem in a finitely presented group it is enough to show that its Dehn function has a recursive upper bound (see for example ~\cite[Theorem 1.5.8, p. 104]{BRS}).
 
 We claim that Lemma~\ref{l-fil-length} yields an exponential upper bound on the Dehn function of $\pi_1(\bar M)$.
 
Fix a finite presentation $\langle F|R\rangle$ of $\Gamma=\pi_1(\bar M,p)$ and
a $K$-quasi-isometry between $M$ and the Caley graph of $\Gamma$ with
respect to that presentation sending $p$ to $e$.
 
Let $F\co S^1\times [0,1]\to\bar M$ be a  homotopy between a constant loop $c_0$ at $p$ and a  loop $c_1$.
Let $C=2L(c_1)$.  Since we are only interested in large scale estimates we can assume that $C\ge 2\diam \bar M$.
By Lemma~\ref{l-fil-length} we can change $F$ to a free homotopy such  that $L(c_t)\le C$ for any $t$.  
 
Consider a sufficiently  fine subdivision of $[0,1]$ given by $0=t_0<t_1<\ldots<t_N=1$.
For every $j=0,\ldots N$ connect $c_{t_j}(0)$ to the fixed base point $p=c_0(0)$ by a shortest geodesic.
For every $j$ this produces  a nullhomotopic loop  $c_j'$ based at $p$ of length $\le C+2\diam(\bar M)$.
Look at the corresponding curves in the  Caley graph and the words they represent
in the presentation $\langle F|R\rangle$. They all have lengths $\le K(C+2\diam(\bar M))\le 2KC$.
The ball of radius $R$ in the Caley graph has at most $\exp(C_2 \cdot R)$ vertices
for some explicit constant $C_2$ depending on $|F|$.
Therefore the loops  $c_j'$ produce at most $\exp(2C_2KC)$ distinct words in the group.
Therefore, if the original homotopy was ``too long'' we can cut out the parts between
repeating words and reduce its length. This produces a homotopy in the Caley graph
of area $\le \exp(C_3\cdot C)$ for some easily computable constant
$C_3=C_3(C_2,K, |F|+|R|)$. Therefore the Dehn function grows at most exponentially.

Lastly, by ~\cite[section 1.11, p. 446]{HB99} if $\pi_1(\bar M)$ has a solvable word problem
and satisfies the conclusion of Lemma~\ref{l-fil-length}, then  it also has solvable conjugacy problem.
\end{proof}

\begin{remark}
One can also prove the solvability of the word problem in $\Gamma$ as follows.
Since $\bar M$ is compact we have that $|sec(M)|\le K$ for some $K>0$ which trivially implies
that for any $p\in M$ the exponential map $\exp_p\co T_pM\to M$ is $\exp(C_1R)$ Lipschitz
on the $R$-ball $B_R(0)\subset T_pM$ for some $C_1>0$ and any $R>0$.
Using that geodesics in $M$ are globally distance minimizing it is not hard to show that $\exp_p^{-1}$
is also $\exp(C_2R)$ Lipschitz which implies an exponential bound on the isoperimetric
function in $M$ and hence the Dehn function of $\Gamma$ also grows at most exponentially. 

It is well-known that Dehn functions of $CAT(0)$-groups grow at most quadratically.
It is reasonable to expect that the same should be true for fundamental groups
of closed  manifolds without conjugate points but we have been unable to prove it.
\end{remark}

\section{3-manifolds without focal points}\label{section-no-focal}
Recall that a Riemannian manifold $M$ is said to have no focal points
if every embedded shortest geodesic $c\co (a,b)\to M$ has no focal points when viewed as a submanifold of $M$.
The class of manifolds without focal points is contained in the class of manifolds without conjugate points
and contains  all non positively curved manifolds.
It is therefore natural to wonder if closed manifolds without focal points always admit
 nonpositively curved metrics.

The main result of this section is the following theorem which answers this question
affirmatively in dimension 3.

\begin{theorem}\label{3-no-focal}
Let $M$ be a closed $3$-manifold. Then $M$ admits a Riemannian metric without focal points
if and only if it admits a Riemannian metric of nonpositive sectional curvature.
\end{theorem}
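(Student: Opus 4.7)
The easy direction ($\Leftarrow$) holds by Rauch comparison, so the task is the converse. Suppose $M^3$ is closed and admits a Riemannian metric without focal points. Since this implies no conjugate points, the universal cover of $M$ is diffeomorphic to $\R^3$, and $M$ is aspherical, closed, and irreducible.

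By Thurston's geometrization (established by Perelman) combined with the JSJ decomposition, every closed aspherical 3-manifold is either geometric or decomposes along incompressible tori into geometric pieces. The classification of those that carry a metric of nonpositive sectional curvature is known: by Leeb (for decompositions containing a hyperbolic block) and Buyalo--Svetlov (for graph manifolds), the obstructions are exactly (i) closed Nil manifolds; (ii) closed Sol manifolds; (iii) closed Seifert manifolds modeled on $\widetilde{SL_2(\R)}$, i.e.\ with nonzero Euler number over a hyperbolic base; and (iv) closed graph manifolds violating the Buyalo--Svetlov--Leeb arithmetic inequalities at some JSJ torus. The plan is to rule out each of (i)--(iv) under the no-focal-points hypothesis, after which the remaining aspherical 3-manifolds admit a nonpositively curved metric by Leeb's theorem.

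In cases (i) and (iii), a power of the Seifert fiber is a nontrivial element $\gamma$ whose centralizer is all of $\pi_1(M)$ (up to finite index), and the resulting central extension $1 \to \langle\gamma\rangle \to Z(\gamma) \to Q \to 1$ does not virtually split as a direct product --- this is precisely the reason these manifolds fail to be nonpositively curved --- so Theorem~\ref{t-main} gives an immediate contradiction. In case (ii), $\pi_1(M)$ itself has the shape $\Z^2 \rtimes_A \Z$ with $A\in SL_2(\Z)$ hyperbolic, which is solvable but not virtually abelian; this contradicts the Croke--Schroeder theorem (made unconditional by Kleiner and Lebedeva) that solvable subgroups of fundamental groups of closed manifolds without conjugate points are virtually abelian. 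In case (iv), for each $\Z^2 < \pi_1(M)$ arising at a JSJ torus I would invoke the no-focal-points flat torus theorem (O'Sullivan, Eschenburg) to produce a totally geodesic flat torus in $M$ representing that class, and then read off the Buyalo--Svetlov--Leeb gluing inequalities from the actual Euclidean geometry of these flats.

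The main obstacle I expect is case (iv). Cases (i)--(iii) reduce to clean algebraic statements about centralizers or solvable subgroups and plug directly into the machinery already developed in the paper. Case (iv), however, requires translating the flat-torus structure forced by the no-focal-points hypothesis into the quantitative arithmetic criterion on Seifert gluing invariants at each JSJ torus --- precisely the step where no-focal-points must do real geometric work beyond what Theorem~\ref{t-main} supplies. If the translation proves too delicate, a more direct route would be to bypass the Buyalo--Svetlov--Leeb classification and use the flat tori to cut and re-glue the focal-point-free metric into a nonpositively curved one piece by piece.
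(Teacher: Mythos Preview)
Your reduction via geometrization and the handling of cases (i)--(iii) are essentially what the paper does: Sol and Nil are excluded by the solvable-subgroup theorem (or O'Sullivan's flat torus theorem), closed Seifert manifolds with nonzero Euler number are excluded by Theorem~\ref{t-main}, and the presence of a hyperbolic piece is dispatched via Leeb.

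The gap is in case (iv). Your primary plan---read off the Buyalo--Svetlov gluing inequalities from the flat JSJ tori---is not clearly workable: those inequalities involve the Seifert data of the \emph{interiors} of the pieces and how the fibers from adjacent pieces sit inside each torus, and the mere existence of a flat totally geodesic torus in the right homotopy class does not by itself hand you that information. Your alternative route (cut along the flat tori and build a nonpositively curved metric piece by piece) is precisely what the paper does, but you are missing the idea that makes it go through. After cutting, you have a Seifert piece $E$ with flat totally geodesic boundary and a no-focal-points metric; that alone does not tell you how to put nonpositive curvature on~$E$.

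The paper's key step is this. Pass to a finite cover $E'\cong S^1\times\Sigma$ and let $\gamma$ be the Seifert fiber. Since $\gamma$ is central in $\pi_1(E')$, the displacement function $d_\gamma$ descends to $E'$; it attains its minimum on the (flat, totally geodesic) boundary and has no interior critical points by Lemma~\ref{l-axes-trivia}(\ref{l-axes-crit}), so $d_\gamma$ is \emph{constant} on $E'$. Hence the entire universal cover $\tilde E$ lies in $A_\gamma$, and O'Sullivan's structure result (Proposition~\ref{p-axes}) gives an honest isometric splitting $\tilde E\cong\R\times N$ with $\gamma$ acting by translation on the $\R$ factor. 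One then uniformizes the induced metric on the base surface $\Sigma$ to constant negative curvature, adjusts collars so the boundary lengths match the originals and the metric is a product near $\partial E$, and glues the pieces back together. It is this isometric splitting of the \emph{whole piece}---not just the flatness of the JSJ tori---that the no-focal-points hypothesis buys you, and it is what your proposal does not yet invoke.
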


One of the main technical tools in the proof is the following result due to O'Sullivan~\cite{OSul-76}.

\begin{theorem}[Flat torus theorem]\cite[Theorem 2]{OSul-76}\label{focal-flat-torus}
Let $M$ be a closed Riemannian manifold without focal points. Let $A\le \pi_1(M)$ be a solvable subgroup.
Then there exists a flat totally geodesically embedded space form $i\co F^k\hookrightarrow M$ such that
$i_*\co \pi_1(F^k)\to \pi_1(M)$ is injective and  $A$ is a finite index subgroup in $i_*(\pi_1(F^k))$.
\end{theorem}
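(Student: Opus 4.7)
The plan is to follow the classical strategy for flat torus theorems in nonpositive curvature (as in Gromoll--Wolf and Eberlein), carefully adapted to the no-focal-points setting. The key facts I would assume about a manifold $M$ without focal points and its universal cover $\tilde M$ are: the distance function $t\mapsto d(p,c(t))$ is convex along any geodesic $c$; Busemann functions are convex and $C^1$; and the flat strip theorem holds, i.e.\ any two geodesic lines in $\tilde M$ that stay a bounded distance apart cobound a flat totally geodesic strip. These are the substitutes for the standard CAT(0) toolbox, and they suffice to run the classical arguments.

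First I would dispose of the cyclic case $A=\langle\g\rangle$. Cocompactness of the deck action together with convexity of $d_\g$ along geodesics forces $d_\g$ to attain a positive minimum, and its minimum set $A_\g\subset\tilde M$ is the union of axes of $\g$. Picking any axis and quotienting by $\g$ gives an isometrically embedded closed geodesic, which is the flat space form $F^1$ in the desired conclusion.

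Next I would treat the abelian case $A\cong\Z^k$ by induction on $k$. Given generators $\g_1,\dots,\g_k$, fix an axis $c$ of $\g_1$. The flat strip theorem applied to $c$ and all parallel axes of $\g_1$ exhibits a flat totally geodesic submanifold $P_c\cong\R\times N_c$ inside $\tilde M$. Since each $\g_j$ commutes with $\g_1$ it permutes axes of $\g_1$, hence preserves $P_c$ (possibly after passing to another parallel strip selected by a minimal-displacement argument on $\g_j|_{P_c}$), and acts on $P_c$ by affine isometries respecting the $\R$ factor. Iterating the construction inside $N_c$ with the induced action of $\g_2,\dots,\g_k$ produces an $A$-invariant flat totally geodesic $\R^k\subset\tilde M$ on which $A$ acts as a translation lattice; the quotient is a closed flat space form $F^k$ and by construction the embedding into $M$ is totally geodesic, with the induced map on fundamental groups identifying $\pi_1(F^k)$ with $A$.

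Finally, for a general solvable $A$ I would induct on derived length. Let $B=A^{(n)}$ be the last nontrivial term of the derived series; then $B$ is abelian, so the previous step yields an $A$-invariant family of parallel flats $\R^k$ (invariance because $B$ is characteristic in $A$). A displacement-minimization on $A/B$ together with convexity of the displacement functions along the flat singles out a canonical flat $F\cong\R^k$ preserved by all of $A$, and restriction gives a homomorphism $\phi\co A\to\operatorname{Isom}(\R^k)=O(k)\ltimes\R^k$ whose image is a crystallographic group containing $\phi(B)$ as a lattice. Any element in $\ker\phi$ would fix $F$ pointwise, contradicting the freeness of the deck action, so $\phi$ is injective and $A$ is virtually $\Z^k$; applying the abelian case to this finite-index subgroup concludes the proof. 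The main obstacle is precisely this last step: without the global CAT(0) inequality one has to use the flat strip theorem plus convexity of $d_\g$ carefully to guarantee that the normalizer of $B$ preserves a \emph{single} flat rather than merely a family of parallel flats, and this is exactly where the sharp Jacobi-field estimates specific to manifolds without focal points (as in O'Sullivan's paper) are indispensable.
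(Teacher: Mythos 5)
You should first know that the paper contains no proof of this statement: it is quoted verbatim, with attribution, as Theorem~2 of O'Sullivan's paper \cite{OSul-76}, and is used in Section~\ref{section-no-focal} purely as a black box (together with Proposition~\ref{p-axes}, quoted from the same source). So there is no proof in the paper to compare yours against; the benchmark is O'Sullivan's own argument, which is indeed a Gromoll--Wolf style flat torus argument adapted to the no-focal-points setting, i.e.\ broadly the route you outline.

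As a standalone proof, however, your sketch has concrete gaps. The convexity you actually lean on is not convexity of $t\mapsto d(p,c(t))$ (which is fine: it is essentially equivalent to the absence of focal points, and it does give convex Busemann functions), but convexity of displacement functions $d_\g$ and of distances between pairs of geodesics; for Riemannian metrics, convexity of $t\mapsto d(c_1(t),c_2(t))$ for all pairs of geodesics already forces nonpositive curvature, so this tool cannot simply be ``assumed'' here. (Attaining the positive minimum of $d_\g$ needs no convexity anyway --- compactness of $\bar M$ suffices --- but your later displacement-minimization steps do.) Second, the set you call $P_c\cong\R\times N_c$ is not flat: the union of axes of $\g_1$ splits isometrically as $\R\times N$ with $N$ carrying an arbitrary induced metric (this is exactly Proposition~\ref{p-axes}); only the strip between two parallel axes is flat, and the flat $\R^k$ must be extracted by iterating the splitting, one $\R$-factor per generator, not by declaring $P_c$ flat. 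Third, the solvable-to-virtually-abelian reduction --- the step you yourself flag --- is asserted rather than proved: you need the normalizer of $B$ to preserve a single flat rather than a parallel family, discreteness of the image of $\phi$ in the isometry group of $\R^k$, and a Bieberbach-type argument, and this is precisely where O'Sullivan's Jacobi-field estimates do the work. Finally, the conclusion asks for an embedded closed flat space form $F^k$ with $A$ of \emph{finite index} in $i_*\pi_1(F^k)$; an $A$-invariant flat on which $A$ acts as a lattice does not immediately give this --- you must pass to the full stabilizer of the flat in $\Gamma$, check that it acts freely and cocompactly, and address injectivity of $F^k\to M$. In short: the strategy is the expected one, but the decisive steps are exactly the ones left unproved.
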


We also need the following result from the same paper.

\begin{prop}\cite[Proposition 5]{OSul-76}\label{p-axes}
Let $\bar M$ be a closed manifold without focal points
and $\gamma$ a nontrivial element of the deck transformation group $\Gamma\cong\pi_1(\bar M)$.
Then
\begin{enumerate}
\item The union of axes $A_\gamma$ is convex and closed in the universal cover $M$.
\item $A_\gamma$ is isometric to $\R\times N$ where $N$ is a smooth submanifold of $\bar M$, possibly with boundary.
The action of $\gamma$ preserves this splitting and acts by translations by $\min d_\g$ on the $\R$-factors.
\end{enumerate}
\end{prop}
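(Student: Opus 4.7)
The strategy is to exploit the sharper geometric control available under the no-focal-points hypothesis as compared with the no-conjugate-points setting. The key analytic input is the classical fact (which is essentially the definition of ``no focal points'' in Jacobi-field terms) that for any Jacobi field $J$ along a geodesic with $J(0)=0$, the function $t\mapsto\|J(t)\|^2$ is convex. A one-parameter-family argument (differentiating a variation of geodesics at both endpoints) upgrades this to convexity of the displacement function: for every geodesic $\sigma$ in $M$, $t\mapsto d_\g(\sigma(t))=d(\sigma(t),\g\sigma(t))$ is convex. This is the crucial ingredient that is unavailable in the merely no-conjugate-points case.

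Given this, part (1) is essentially immediate. By Lemma~\ref{l-axes-trivia}(\ref{i-mind}), $A_\g=\{x\in M:d_\g(x)=\min d_\g\}$ is the minimum set of the convex continuous function $d_\g$, hence convex and closed in $M$.

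For part (2), I would first prove the following flat strip lemma for axes: if $c$ and $c_1$ are axes of $\g$, then $t\mapsto d(c(t),c_1(t))$ is constant, and the geodesic segments joining $c(t)$ to $c_1(t)$ sweep out a flat totally geodesic strip. Convexity of this function is immediate from Step~1 applied to the geodesics $c,c_1$. Boundedness comes from $\g$-equivariance: by Lemma~\ref{l-axes-trivia}(\ref{i-shift}), $\g$ translates both axes by $L=\min d_\g$, so the function is $L$-periodic, and a bounded convex function on $\R$ is constant. Standard Riemannian rigidity (parallel geodesics at constant distance in a manifold without focal points bound a flat strip, via second-variation vanishing) then identifies the strip as flat and totally geodesic. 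Given this, I would define $N$ as the set of points of $A_\g$ lying at Busemann parameter zero on a fixed axis $c$ (i.e.\ $N=\{x\in A_\g:b_c(x)=0\}$), and construct the splitting by $(t,x)\mapsto \phi_t(x)$, where $\phi_t$ is the geodesic flow in the direction of the axis through $x$. Through each point of $A_\g$ there is a unique axis (by Lemma~\ref{l-axes-trivia}(\ref{i-mind}) and uniqueness of geodesics in $M$), these axes depend smoothly on the point, and the flat strip lemma shows that this map is a local and hence global isometry onto $A_\g$. The $\g$-action on the $\R$ factor is by translation by $L$ by Lemma~\ref{l-axes-trivia}(\ref{i-shift}); smoothness of $N$ follows from smoothness of the exponential map and the implicit function theorem applied to the smooth level set defining $N$.

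The main obstacle is the flat strip lemma and the associated regularity of $N$. The convexity of $d_\g$ gives the strip structure almost for free, but verifying that the perpendicular foliation by geodesic segments is smooth (so that $N$ is genuinely a smooth submanifold, possibly with boundary where $A_\g$ meets the frontier of $M$ along which the perpendicularity degenerates) requires care; this is precisely the point at which ``no focal points'' is strictly stronger than ``no conjugate points,'' since the orthogonal Jacobi fields needed for the transverse smooth structure are controlled only under the stronger hypothesis.
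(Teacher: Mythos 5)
First, note that the paper does not prove Proposition~\ref{p-axes} at all: it is quoted verbatim from O'Sullivan \cite[Proposition~5]{OSul-76}, so your proposal has to be measured against O'Sullivan's argument rather than anything in this text. Measured that way, it has a genuine gap at its foundation. ``No focal points'' is equivalent to the statement that for every Jacobi field with $J(0)=0$ the function $t\mapsto\|J(t)\|^2$ is \emph{nondecreasing} for $t>0$, i.e.\ $\langle J,J'\rangle\ge 0$; it is \emph{not} the statement that $\|J\|^2$ is convex. Convexity of $\|J\|^2$ means $\langle J,J'\rangle$ is nondecreasing, i.e.\ $\|J'\|^2\ge\langle R(J,\dot c)\dot c,J\rangle$ along the geodesic, which is a strictly stronger, curvature-type condition. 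The next step is worse: the second derivative of $t\mapsto d(\sigma(t),\gamma\sigma(t))$ is, by the second variation formula, the index form of the variation Jacobi field along the connecting geodesic, and for a Jacobi field this equals the boundary term $\langle J,J'\rangle$ evaluated at the far endpoint minus the same quantity at the near endpoint --- a Jacobi field with \emph{both} endpoints free, not one vanishing at an endpoint. Requiring this to be nonnegative for all such configurations amounts to monotonicity of $\langle J,J'\rangle$ for Jacobi fields with arbitrary boundary data, and for Riemannian metrics that (like Busemann convexity of the distance function) is equivalent to nonpositive sectional curvature. What the no-focal-points hypothesis actually yields is only convexity of $x\mapsto d(p,x)$ for a \emph{fixed} $p$ (convexity of geodesic spheres); in $d_\gamma$ both arguments move. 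So your Step~1 in effect assumes the nonpositive-curvature hypothesis that the proposition is designed to avoid, and with it collapse your proof of part (1) (minimum set of a convex function) and your ``bounded convex function on $\R$ is constant'' step in the flat-strip argument.

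The repair --- and, in substance, O'Sullivan's route --- avoids convexity altogether: for two axes $c,c_1$ of $\gamma$ the function $t\mapsto d(c(t),c_1(t))$ is $(\min d_\gamma)$-periodic by Lemma~\ref{l-axes-trivia}(\ref{i-shift}), hence bounded on all of $\R$, and one invokes the flat strip theorem for manifolds without focal points (proved via second variation together with the divergence/monotonicity property of geodesics, not via convex distance functions) to obtain a flat, totally geodesic, $\gamma$-invariant strip bounded by $c$ and $c_1$ on which $\gamma$ acts as a translation by $\min d_\gamma$. Convexity of $A_\gamma$ then follows because the geodesic segment between two points of $A_\gamma$ lies in such a strip and therefore consists of points displaced exactly $\min d_\gamma$, and the same strips furnish the local product structure needed for the splitting $A_\gamma\cong\R\times N$ (the smoothness of $N$ still requires an argument, which you only sketch). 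Your outline does mention the flat strip theorem, but only after the unavailable convexity has done the real work, so as written the proposal does not go through.
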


\begin{proof}[Proof of Theorem~\ref{3-no-focal}]
To simplify the exposition we will only give the proof for orientable manifolds.
The non-orientable case is treated similarly.
Alternatively we can  appeal to a result of M.~Kapovich and Leeb ~\cite[Corollary 2.5]{KaLe}
that given a finite cover $M_1\to M_2$ between closed 3-manifolds,
$M_1$ admits a metric of nonpositive sectional curvature if and only if $M_2$ does.

Let $\bar M$ be a closed orientable  3-manifold without focal points. Since $\bar M$ is aspherical it is prime.
By the geometrization  it admits a geometric decomposition. By \cite{CS} or by Theorem~\ref{focal-flat-torus},
none of the geometric pieces are sol or nil. Also, if $\bar M$ is Seifert fibered then by Theorem~\ref{t-main}
it is finitely covered by a product $S^1\times S_g$ where $S_g$ is a closed surface of genus $\ge 2$.
All such manifolds obviously admit nonpositively curved metrics and hence so does $\bar M$.
Thus we may assume that $\bar M$ is not Seifert fibered. If its geometric decomposition contains
at least one hyperbolic piece then $\bar M$ admits a metric of nonpositive curvature by \cite{leeb95}.
Thus we may assume that $\bar M$ is orientable, aspherical, not Seifert fibered and its geometric
decomposition has no nil or sol pieces. This means that $\bar M$ is a graph manifold
and all its geometric pieces are modelled on $\mathbb  H^2\times\R$.

Let us consider a single geometric piece $E$ of $\bar M$.
It is Seifert fibered over a hyperbolizable 2-manifold with boundary.
By Theorem~\ref{focal-flat-torus} we can assume that all the boundary tori of $E$
(which are incompressible in $\bar M$) are flat and totally geodesic in $\bar M$.
Let $E'\to E$ be a finite normal cover which is topologically a product $S^1\times \Sigma$
where $\Sigma$ is a compact 2-manifold with boundary.
Let $\gamma$ be a nontrivial element of $\pi_1(\bar M)$ corresponding to the $S^1$  factor of $E'$.
Let $\pi\co M\to\bar M$ be the universal cover of $\bar M$
and let $p\co \tilde E\to E$ be the universal cover of $E$.
Since the inclusion $E\hookrightarrow \bar M$ is $\pi_1$-injective we can think of $\tilde E$ as  a subset of $M$.

Clearly, the preimages of the boundary tori of $E$ in $\tilde E$ belong to $A_\gamma$.
This easily implies that the whole $\tilde E$ is contained in $A_\gamma$
by essentially the same argument as in Example~\ref{ex-center}. 
Indeed, since  $\pi_1(E')\subset Z(\gamma)$ in $\pi_1(\bar M)$,
$d_\g$ descends from $\tilde E$ to a well-defined function $d_\g'$ on $E'$
and since $E'$ is compact this function attains its maximum and minimum there.
The whole boundary of $E'$ belongs to the set of minima of $d_\gamma'$ and since $d_\gamma$
has no critical points outside $A_\g$ by Lemma~\ref{l-axes-trivia}(\ref{l-axes-crit})
we get that $d_\g'$ is constant on $E'$ and $d_\g$ is constant on $\tilde E$.

Therefore by Proposition~\ref{p-axes}(2) $\tilde E$ isometrically splits as a product.
More precisely, $\tilde E$ is isometric to $\R\times N^2$ where $N$
is the universal cover of $\Sigma$ equipped with some Riemannian metric with totally geodesic boundary.
The isometric action of $\pi_1(E')=\pi_1(S^1)\times \pi_1(\Sigma)$ on $\R\times N$
preserves this splitting, furthermore the action of the $\pi_1(S^1)$ factor
preserves the $\R$-fibers of $\R\times N$ and the action of the $\pi_1(\Sigma)$ factor
descends to the action on $N$ by deck transformations.
Thus the metric of $N$ descends to a well defined Riemannian metric $g_0$ on $\Sigma$.
Note that we {\it do not} claim that $E'$ itself is isometric to a product.

Let us apply the uniformization procedure to the double of $\Sigma$.
It produces a  metric  $g_1$ of constant negative curvature in the same conformal class as the original metric $g_0$.
Since the uniformization procedure commutes with isometries the boundary of $\Sigma$ remains totally geodesic.
Also, the isometric action (by deck transformations)
of $\pi_1(\Sigma)$ on $N$  with respect to $\tilde g_0$ remains
isometric 
with respect to $\tilde g_1$.
Therefore the action of $\pi_1(E)$ on $\tilde E$ is isometric with respect not only to the original
product metric $\R\times (N,\tilde g_0)$ but also with respect to the new metric $\R\times (N,\tilde g_1)$
which is nonpositively curved.
Notice that the conformal change of $g_0$ to $g_1$ may change the lengths of the boundary circles of $\Sigma$
but we can easily modify the metric $g_1$ to $g_2$ by rescaling and attaching tubular collars near
the boundary so that the $g_2$ is still nonpositively curved and all the boundary circles
have the same lengths as in $g_0$  and are still totally geodesic.
Moreover we can can easily arrange that $g_2$ is a product near the boundary.
This yields a metric  on $E$ which is nonpositively curved,
$\partial E$ is totally geodesic and isometric to the original boundary,
and the metric near $\partial E$ is flat.
Doing it on all geometric pieces of $\bar M$ separately and gluing the resulting
metrics together yields a metric of nonpositive curvature on $\bar M$.
\end{proof}

\section{Open problems}\label{s:open}

The main open question concerning manifolds without conjugate points is Question~\ref{q-nonpos} which we restate here.

\begin{question}
Does every closed Riemannian manifold without conjugate points admit a nonpositively curved metric?
The same can be asked about manifolds without focal points.
\end{question}

While we suspect that this is likely false in general,  it might be true in dimension~3
(cf.\ Theorem~\ref{3-no-focal}). The main problem is to understand which graph manifolds
admit metrics without conjugate points.
The corresponding question about nonpositively curved metrics is completely understood
by work of Buyalo and Kobelski ~\cite{BuKo2}.

The simplest test case to understand is the following

\begin{question}
Let $\Sigma=T^2\setminus D^2$. Let $\bar M_1=\bar M_2=\Sigma\times S^1$.
Let $\bar M=\bar M_1\cup_f\bar M_2$ where $f$ is a self diffeomorphism of the boundary torus $T^2=\partial \bar M_1$
whose action on $\Z^2=\pi_1(T^2)$ is given by a matrix $A\in SL(2,\Z)$
with $|{\operatorname{trace}(A)}|>2$.
Then it is easy to see that $\bar M$ does not admit a metric of nonpositive curvature
(say, by Theorem~\ref{focal-flat-torus} and Proposition~\ref{p-axes}).
\emph{Does $\bar M$ admit a metric without conjugate points?}
\end{question}

\begin{question}
One of the natural classes containing CAT(0)-groups is the class of semi-hyperbolic groups.
As was remarked in the introduction it is known that in semi-hyperbolic groups
all abelian subgroups are straight and all solvable subgroups are virtually abelian.
It is therefore natural to pose the following weaker version of Question~\ref{q-nonpos}.

\emph{Is the fundamental group of a closed manifold without conjugate points semi-hyperbolic?}

Note however, that Theorem~\ref{t-main} \emph{does not} hold for general semi-hyperbolic groups
with the  fundamental groups of unit tangent bundles to surfaces of genus $>1$ providing the simplest counterexamples.

Lastly, let us note that it is easy to see that if $\bar M$ has no focal points
then $\pi_1(\bar M)$ is semi-hyperbolic.
Indeed, no focal points condition implies that if $c_1(t),c_2(t)$ are  geodesics in $M$ with $c_1(0)=c_2(0)$
then $d(c_1(t),c_2(t))$ is monotone increasing for $t>0$ ~\cite[Proposition 2]{OSul-76}.
This trivially implies that the canonical bicombing of $M$ by shortest geodesics
satisfies the fellow traveller property and hence $\pi_1(\bar M)$ is semi-hyperbolic.
\end{question}

\begin{question}
Let $\bar M$ be a closed manifold without conjugate points and $\gamma\in\pi_1(\bar M)$.
Is it true that $Z(\gamma)$ is straight in $\pi_1(\bar M)$? This is easily seen to be true if $\bar M$ has no focal points because in this case $A_\gamma$  is convex in the universal cover $M$ and $A_\g/Z(\g)$ is compact.
More generally it is known to be true  for semi-hyperbolic groups~\cite{HB99}.
\end{question}


\end{document}